\documentclass[11pt]{article}

\usepackage[margin=1.1in]{geometry}
\usepackage{amsmath,amssymb,amsthm,mathtools}
\usepackage{enumitem}
\usepackage{xcolor}
\usepackage{hyperref}
\usepackage{microtype}

\hypersetup{colorlinks=true,linkcolor=blue,citecolor=blue,urlcolor=blue}

\newtheorem{theorem}{Theorem}[section]
\newtheorem{lemma}[theorem]{Lemma}
\newtheorem{proposition}[theorem]{Proposition}

\newtheorem{conjecture}[theorem]{Conjecture}
\newtheorem{definition}[theorem]{Definition}
\newtheorem{remark}[theorem]{Remark}
\newtheorem{example}[theorem]{Example}

\newcommand{\COP}{\operatorname{COP}}
\newcommand{\MK}{\mathcal{K}}
\newcommand{\MQ}{\mathcal{Q}}

\newcommand{\dist}{\operatorname{dist}}

\newcommand{\one}{\mathbf{1}}
\newcommand{\rev}[2]{\textcolor{blue}{#1}}

\title{Sum-of-Squares Certificates for Copositive Matrices via Recursive Identities: The de Klerk--Pasechnik Conjecture and Hoffman--Pereira Matrices}
\author{
  Jineon Baek \thanks{
  Korea Institute for Advanced Study \texttt{jineon.kias@gmail.com}}
  \and
  Luis Felipe Vargas \thanks{Université de Toulouse; LAAS-CNRS, 7 avenue du colonel Roche, F-31400 
Toulouse, France
\texttt{lfvargasbe@laas.fr}}
}
\date{\today}

\begin{document}
\maketitle

\begin{abstract}
We establish the conjecture by de Klerk and Pasechnik (2002),  claiming that the semidefinite bounds $\vartheta^{(r)}(G)(r\geq 0)$ for the stability number $\alpha(G)$ are exact at $r=\alpha(G)-1$, by exhibiting an explicit sum-of-squares certificate. This certificate allows us to recover a known characterization of the  minimizers of the Motzkin-Straus formulation for $1/\alpha(G)$. Additionally, we give sum-of-squares copositivity certificates for the matrices satisfying the Hoffman--Pereira sign condition, a crucial condition for characterizing copositive matrices with $\{-1,0,1\}$ entries.
\end{abstract}


\section{Introduction}
\label{sec:introduction}

A real symmetric matrix $M\in\mathcal S^n$ is \emph{copositive} if
\[
y^{\mathsf T} M y \ge 0 \qquad \text{for all } y\in\mathbb R^n_+,
\]
where $\mathbb R^n_+=\{x\in \mathbb{R}^n: x_i\geq 0, \text{ for }i\in [n]\}$ denotes the nonnegative orthant. The set of copositive matrices,
\[
\mathrm{COP}_n = \{M\in\mathcal S^n : y^{\mathsf T}My\ge0 \text{ for all } y\in\mathbb R^n_+\},
\]
is a closed convex cone. Although $\COP_n$ looks innocently similar to the cone $\mathcal{S}_n^+$ of positive semidefinite matrices, it captures many difficult problems.
Indeed, many NP-hard combinatorial optimization problems, including the stability number and the chromatic number of a graph, can be formulated exactly as linear programs over $\mathrm{COP}_n$~\cite{deKlerkPasechnik2002,GvozdenovicLaurent08}. Moreover, testing whether a matrix is copositive is co-NP-complete~\cite{MurtyKabadi1987}. Copositive programming is therefore a hard but remarkably expressive framework, and this motivates the search for explicit \emph{certificates} of copositivity.

Given a symmetric matrix $M\in\mathcal S^n$, we consider the polynomials
\begin{equation}
p_M(x) = x^{\mathsf T}Mx, \qquad P_M(x) = (x^{\circ2})^{\mathsf T}M\,x^{\circ2},
\label{eq:pM-PM}
\end{equation}
where $x^{\circ2}=(x_1^2,\dots,x_n^2)$. Thus $M$ is copositive precisely when $p_M$ is nonnegative on $\mathbb R^n_+$, or, equivalently, when $P_M$ is nonnegative on $\mathbb R^n$. In this paper we study \emph{sum-of-squares} certificates of copositivity for two well-known classes of copositive matrices: 1) Copositive matrices arising from graphs and their stability number and 2) Hoffman--Pereira type matrices.

A polynomial $p$ is a sum of squares if $p=\sum_i q_i^2$ for some polynomials $q_i$. We denote the cone of sum-of-squares polynomials by $\Sigma$. We set $\Sigma_r=\Sigma\cap \mathbb{R}[x_1, \dots, x_n]_r$.

Observe that if, for some $r\geq0$,
\begin{align}\label{cert-parrilo}
\Big(\sum_{i=1}^n x_i^2\Big)^rP_M(x)\in \Sigma,
\end{align}
then $M$ is copositive. This certificate was first proposed by Parrilo (for $r=1$)~\cite{parrilo}. Later, de Klerk and Pasechnik \cite{deKlerkPasechnik2002} defined, for $r\geq0$, the cones $\MK_n^{(r)}$ based on this certificate for optimization purposes.
\begin{align}
  \mathcal K_n^{(r)} = \Big\{\, M\in\mathcal S^n \;:\; \Big(\sum_{i=1}^n x_i^2\Big)^{r} (x^{\circ2})^{\mathsf T} M\, x^{\circ2} \in \Sigma \,\Big\}.  
\end{align}
It is clear that $\MK_n^{(r)}\subseteq\MK_n^{(r+1)}\subseteq\COP_n$ for all $r\geq 0$, and
these cones are known to cover the interior of $\COP_n$, i.e.,
$\mathrm{int}(\COP_n)\subseteq \bigcup_{r\geq0}\MK_n^{(r)}$. This inclusion is a consequence
of a classical theorem of P\'olya~\cite{Polya}: if $q$ is a homogeneous polynomial that is
strictly positive on $\mathbb R^n_+\setminus\{0\}$, then, for a sufficiently large integer
 $r$, the polynomial $\big(\sum_{i=1}^n x_i\big)^r q(x)$ has only nonnegative
coefficients. Applying this to $q=p_M$ for $M$ in the interior of $\COP_n$ (so that $p_M$ is
strictly positive on $\mathbb R^n_+\setminus\{0\}$) shows that, for $r$ large enough,
$\big(\sum_i x_i\big)^r\,p_M(x)$ has only nonnegative coefficients, and hence
$\big(\sum_i x_i^2\big)^r\,P_M(x)$ is trivially a sum of squares of monomials, so
$M\in\MK_n^{(r)}$.

Diananda \cite{Dia} showed that, for $n\leq 4$, every $n \times n$ copositive matrix $M$ can
be written as $M = P + N$, where $P$ is positive semidefinite and $N$ is entrywise nonnegative. Choi and Lam~\cite{CL} showed that, for every $n\geq1$, the cone $\MK_n^{(0)}$
consists precisely of the matrices that admit such a decomposition~\cite{HallNewman1963}. Therefore, for $n \leq 4$, we have $\COP_n=\MK_n^{(0)}$. This result does not extend to $n=5$~\rev{\cite{HallNewman1963}}{\cite{}}. For example, the Horn matrix is a $5\times 5$ copositive matrix for which $P_M$ is not a sum of squares. Schweighofer and Vargas~\cite{SV24} showed, however, that every copositive $5\times5$ matrix admits a certificate as in~\eqref{cert-parrilo}; that is, $\COP_5=\bigcup_{r\geq 0}\MK_5^{(r)}$. Dickinson, D\"ur, Gijben, and Hildebrand
\cite{DDGH} showed that, for every fixed $r\geq0$, some positive diagonal scaling of the
Horn matrix, while copositive, fails to lie in $\MK_5^{(r)}$, so the union above is necessarily
infinite. The same fact also follows from the stronger result of Bodirsky, Kummer, and Thom~\cite{BodirskyKummerThom2026}, which shows that $\COP_5$ is not a spectrahedral shadow.

Several other certificates, and the corresponding cones, have been proposed in the
literature. A related
hierarchy, working directly with $p_M$ rather than $P_M$, was
introduced by Pe\~na, Vera, and Zuluaga \cite{ZVP06}:
\begin{align}\label{def-Q}
\mathcal Q_n^{(r)} = \Big\{\, M\in\mathcal S^n \;:\; \Big(\sum_{i=1}^n x_i\Big)^{r} x^{\mathsf T}Mx
= \sum_{\substack{\beta\in\mathbb N^n \\ |\beta|\in\{r,\,r+2\}}} \sigma_\beta\, x^\beta,
\ \ \sigma_\beta\in\Sigma_{r+2-|\beta|} \,\Big\}.  
\end{align}
As with $\MK_n^{(r)}$, we have $\MQ_n^{(r)}\subseteq\MQ_n^{(r+1)}\subseteq\COP_n$ for all
$r\geq0$, and $\mathrm{int}(\COP_n)\subseteq\bigcup_{r\geq0}\MQ_n^{(r)}$. Moreover, $\MQ_n^{(r)}\subseteq \MK_n^{(r)}$ for all $r\geq0$: if $M\in\MQ_n^{(r)}$, substituting $x_i\mapsto x_i^2$ in the decomposition certifying
$M\in\MQ_n^{(r)}$ yields a sum-of-squares decomposition of
$\big(\sum_i x_i^2\big)^r P_M(x)$, showing $M\in\MK_n^{(r)}$.

Vargas, Vera, and Dickinson~\cite{VargasVeraDickinson2025} introduced a further
hierarchy of inner approximations to $\COP_n$, obtained by relaxing the fixed multiplier
$\big(\sum_{i=1}^n x_i\big)^r$ in the definition of $\MQ_n^{(r)}$ to an arbitrary homogeneous
polynomial with nonnegative coefficients. For $r\in\mathbb N$, let
$\mathcal N_{n,r} = \{\, \sum_{\beta \in \mathbb N^n,\, |\beta| = r} c_\beta x^\beta \;:\; c_\beta \ge 0 \,\}$
denote the cone of homogeneous degree-$r$ polynomials in $n$ variables with nonnegative
coefficients, and set $\|p\|_1 := \sum_\beta |c_\beta|$ for $p=\sum_\beta c_\beta x^\beta$.
The cones $\widetilde{\MQ}_n^{(r)}$ are then defined by
\begin{equation}
\begin{aligned}
\widetilde{\MQ}_n^{(r)} = \Big\{\, M \in \mathcal S^n \;:\;&
p(x)\, x^{\mathsf T}Mx = \sum_{\substack{\beta \in \mathbb N^n \\ |\beta| \in \{r,\,r+2\}}}
\sigma_\beta\, x^\beta,\quad \sigma_\beta \in \Sigma_{r+2-|\beta|},\\
&\text{for some } p \in \mathcal N_{n,r},\quad \|p\|_1 = 1 \,\Big\}.
\end{aligned}
\label{eq:tildeQ}
\end{equation}
In words, $M\in\widetilde{\MQ}_n^{(r)}$ if \emph{some} normalized nonnegative-coefficient
multiplier $p$ of degree $r$ (not necessarily $\big(\sum_i x_i\big)^r$) makes
$p(x)\,x^{\mathsf T}Mx$ admit a structured decomposition of the same shape as in the
definition of $\MQ_n^{(r)}$. Since $\big(\sum_i x_i\big)^r\in\mathcal N_{n,r}$ up to
normalization, it follows that
\[
\mathcal Q_n^{(r)} \;\subseteq\; \widetilde{\mathcal Q}_n^{(r)} \;\subseteq\; \COP_n.
\]

\subsection{The stability number and the de Klerk--Pasechnik conjecture}

Let $G=(V=[n],E)$ be a graph on $n$ vertices. A set $S\subseteq V$ is \emph{stable} (or independent) if it contains no edge of $G$, and the \emph{stability number} $\alpha(G)$ is the largest cardinality of a stable set. Computing $\alpha(G)$ is NP-hard. De Klerk and Pasechnik~\cite{deKlerkPasechnik2002} proposed the following exact reformulation of $\alpha(G)$ as a linear optimization problem over $\COP_n$. Let $A_G$ be the adjacency matrix of $G$, $I$ the identity matrix, and $J$ the all-ones matrix. Then
\begin{align}\label{alpha-COP}
\alpha(G) = \min\big\{\, t \;:\; t(I+A_G) - J \in \mathrm{COP}_n \,\big\}.
\end{align}
For $t=\alpha(G)$, the minimizing matrix
\[
M_G = \alpha(G)(I+A_G) - J
\]
is therefore copositive. De Klerk and Pasechnik introduced the hierarchy $\vartheta^{(r)}(G)$ (for $r\geq 0$) by replacing $\COP_n$ with $\MK_n^{(r)}$ in formulation (\ref{alpha-COP}).
\[
\vartheta^{(r)}(G) = \min\big\{\, t \;:\; t(I+A_G) - J \in \mathcal K_n^{(r)} \,\big\}.
\]
Clearly,
\[
\alpha(G)\leq \dots \leq \vartheta^{(2)}(G) \leq \vartheta^{(1)}(G) \leq \vartheta^{(0)}(G).
\]
Since $\operatorname{int}(\COP_n)\subseteq\bigcup_{r\geq0}\MK_n^{(r)}$, it follows that $\lim_{r\to \infty}\vartheta^{(r)}(G)=\alpha(G)$. The parameter $\vartheta^{(0)}(G)$ coincides with the parameter $\vartheta'(G)$ introduced by Schrijver~\rev{\cite{Schrijver1979}}{\cite{}} as a strengthening of the Lov\'asz $\vartheta$ number~\cite{Lovasz}. De Klerk and Pasechnik~\cite{deKlerkPasechnik2002} conjectured that this hierarchy converges to $\alpha(G)$ after $\alpha(G)-1$ steps or, in other words, that $M_G$ admits a certificate as in~\eqref{cert-parrilo} with $r=\alpha(G)-1$. Whether $M_G$ admits such a certificate is not immediate from the results of P\'olya~\cite{Polya} or Reznick~\cite{reznick}, or from the fact that the cones $\MK^{(r)}$ cover the interior of $\COP_n$. Indeed, $M_G$ lies on the \emph{boundary} of $\COP_n$, so none of these results applies directly.
\begin{conjecture}[de Klerk--Pasechnik, 2002]
\label{conj:dkp}
For every graph $G$,
\[
\vartheta^{(\alpha(G)-1)}(G) = \alpha(G).
\]
Equivalently, the graph matrix $M_G=\alpha(G)(I+A_G)-J$ lies in $\mathcal K_n^{(\alpha(G)-1)}$.
\end{conjecture}
The hierarchy $\vartheta^{(r)}(G)$ and Conjecture~\ref{conj:dkp} have been extensively studied. We recap some of the main results. Conjecture~\ref{conj:dkp} is known to hold for perfect graphs (with $r=0$) and for odd cycles (with $r=1$). Gvozdenovi\'c and Laurent~\cite{GL07} proved Conjecture~\ref{conj:dkp} for graphs with $\alpha(G)\leq 8$ (see also~\cite{PVZ07} for $\alpha(G)\leq 6$). Schweighofer and Vargas~\cite{SV24} proved that $\vartheta^{(r)}(G)$ always has finite convergence; that is, the matrices $M_G$ always admit a certificate as in~\eqref{cert-parrilo}. However, their result gives no bound on the number of steps required.

In this paper, we prove the de Klerk--Pasechnik conjecture (Conjecture~\ref{conj:dkp}) by exhibiting an explicit sum-of-squares decomposition. This decomposition in fact shows the stronger statement that $M_G$ lies in the cone $\mathcal Q_n^{(\alpha(G)-1)}$ for \emph{every} graph $G$. 
\begin{theorem}
For every graph $G=([n],E)$, the matrix $M_G$ belongs to $\MQ_n^{(\alpha(G)-1)}$. In particular, Conjecture~\ref{conj:dkp} holds.
\end{theorem}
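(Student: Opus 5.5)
We plan to prove, by induction on $\alpha$, the following stronger statement. For every graph $G$ and every integer $t\ge\alpha(G)$, the matrix $t(I+A_G)-J$ lies in $\MQ_n^{(t-1)}$, with a certificate of a prescribed recursive shape. Write $e=\sum_i x_i$ and
\[
f_G^{t}(x)=t\Big(\sum_i x_i^2+2\sum_{ij\in E}x_ix_j\Big)-e^2 .
\]
The goal is to write $e^{t-1}f^t_G$ as $\sum_\beta \sigma_\beta x^\beta$ with $|\beta|\in\{t-1,t+1\}$ and $\sigma_\beta\in\Sigma$ of complementary degree. Taking $t=\alpha(G)$ then gives the theorem, and Conjecture~\ref{conj:dkp} follows from $\MQ_n^{(r)}\subseteq\MK_n^{(r)}$.

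The base case is $\alpha(G)=1$, i.e.\ $G$ complete. Then $f^t_G=(t-1)e^2$ for $t\ge 1$, and $e^{t-1}\cdot(t-1)e^2$ has nonnegative coefficients. Every monomial of degree $t+1$ is admissible with a constant (hence SOS) coefficient, so the certificate is immediate.

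The inductive step rests on a recursive identity that peels off one vertex at a time. For a vertex $i$, let $G_i=G\setminus N[i]$ be the graph left after deleting $i$ and its closed neighbourhood. Then $\alpha(G_i)\le\alpha(G)-1\le t-1$, so induction applies to $G_i$ with parameter $t-1$. We expand $e\cdot f^t_G=\sum_i x_i f^t_G$ and try to write each term as
\[
x_i f_G^{t}(x)= x_i\, f^{t-1}_{G_i}(x_{V(G_i)}) + x_i\,(\text{a quadratic form with a PSD-plus-nonnegative certificate}) .
\]
The remainder should absorb the contributions of $N[i]$, roughly terms like $x_i\big(\sqrt{\cdot}\,x_i-\sum_{j\in N(i)}x_j\cdots\big)^2$ plus terms of the form $x_ix_jx_k$ with nonnegative coefficients. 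If this holds, we multiply by $e^{t-2}$ and use $e^{t-2}=\big(\sum_{v\in G_i}x_v+\text{rest}\big)^{t-2}$. The part $x_i\big(\sum_{v\in G_i}x_v\big)^{t-2}f^{t-1}_{G_i}$ has a certificate by induction after multiplying by the monomial $x_i$, which raises all $|\beta|$ by one as needed. The cross terms involve extra variables, so they should also be multiplied by $f^{t-1}_{G_i}$. To keep them nonnegative we need a monotonicity fact: if $H$ has a certificate at level $s$, so does $H$ at level $s+1$ after multiplying by $e_H$. This holds by definition of the nested cones, and we carry it in the induction.

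The main obstacle is finding the exact local identity for $x_if^t_G$, and in particular handling the negative term $-e^2$ split across $N[i]$ and $V(G_i)$. Algebraically, $f^t_G$ restricted to $G_i$ is $f^{t-1}_{G_i}+\big(\sum_{G_i}x_v^2+2\sum_{E(G_i)}x_ux_v\big)$. The surplus quadratic is not nonnegative by itself, since it is only nonnegative coefficient-wise except on the diagonal. It must be combined with the terms $-2\big(\sum_{N[i]}x_j\big)\big(\sum_{G_i}x_v\big)$ and $-\big(\sum_{N[i]}x_j\big)^2$ coming from $-e^2$.

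Our first attempt is to symmetrize. Average the identity over $i$ with weights, using $\sum_i x_i\,[\cdot]$, so that the cross terms $x_ix_jx_v$ with $j\in N(i)$, $v\notin N[i]$ pair up with $x_i^2x_v$ diagonal terms. The aim is to show that the total residual cubic is a sum of $x_i\cdot(\text{linear})^2$ terms plus nonnegative monomials. Concretely, we will try the candidate
\[
x_i\Big(t x_i^2 - x_i e + \dots\Big),
\]
checking the identity $\sum_i x_i\big(tx_i+ t\sum_{j\in N(i)}x_j - e\big)\cdot(\cdot)$.

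If this direct approach fails, we will fall back on the Motzkin--Straus structure. We would use the fact that $f^t_G\ge0$ on the simplex is tight exactly at uniform distributions on maximum stable sets, and design the squares $\sigma_\beta$ to vanish there. That requirement pins down the linear forms inside the squares.
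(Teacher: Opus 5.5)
\paragraph{Verdict: there is a genuine gap.} Your proposal does not yet contain a proof. The exact local identity $x_if^t_G=x_if^{t-1}_{G_i}+x_iq_i$ (or its symmetrized form) is what you yourself call the main obstacle, and it is never produced. The candidate ``$x_i(tx_i^2-x_ie+\dots)$'' and the Motzkin--Straus fallback are heuristics, not arguments.

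\paragraph{The surrounding induction fails even if that identity is found.} After multiplying by $e^{t-2}$ and expanding $e=e_{G_i}+(\text{rest})$, you get cross terms $x_i\,e_{G_i}^{k}(\text{rest})^{t-2-k}f^{t-1}_{G_i}$ with $k<t-2$. These need $f^{t-1}_{G_i}$ certified at level $k$, below the level $t-2$ that induction supplies, and your monotonicity fact only goes upward. This is not a technicality. Take $G=C_5\sqcup\{i\}$ with $i$ isolated, so $t=\alpha(G)=3$, $G_i=C_5$, and $f^{2}_{C_5}=p_{M_{C_5}}$ is the Horn form. Suppose $x_i\,e\,f^{2}_{C_5}$ had a decomposition $\sum_{|\beta|\in\{2,4\}}x^\beta\sigma_\beta$. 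Substituting $x\mapsto x^{\circ2}$ and setting $x_i=1$ would make $(1+\sum_{v}y_v^2)\,P_{M_{C_5}}(y)$ a sum of squares. Its lowest-degree homogeneous part $P_{M_{C_5}}$ would then be a sum of squares, which is false since the Horn matrix is not in $\mathcal K_5^{(0)}$. So in this example the summand $x_ie^{t-2}f^{t-1}_{G_i}$ of your plan cannot be certified at all, let alone term by term. Recursing on $G\setminus N[i]$ with its own multiplier $e_{G_i}$ is incompatible with the single global multiplier $e^{t-1}$.

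\paragraph{The missing idea.} Recurse over stable sets weighted by monomials, and keep the global $S=e$ inside every residual. Your candidate linear form $t x_i+t\sum_{j\in N(i)}x_j-e$ with $t=\alpha$ is exactly the paper's $L_{\{i\}}=(\alpha-1)S-\alpha\nu_{\{i\}}$, and indeed $p_G=\sum_i x_iL_{\{i\}}$. The paper proceeds as follows:
\begin{enumerate}
\item Define $L_T=(\alpha-r)S-\alpha\nu_T$ and $A_r=\sum_{T\in\mathcal S_r}x_TL_T$.
\item Substitute $S=\frac{1}{\alpha-r}L_T+\frac{\alpha}{\alpha-r}\nu_T$ to get $SA_r=\frac{1}{\alpha-r}\sum_T x_TL_T^2+\frac{\alpha}{\alpha-r}\sum_T x_T\nu_TL_T$.
\item Regroup the last sum over $D=T\cup\{v\}\in\mathcal S_{r+1}$ and apply the counting identity $\sum_{u\in D}L_{D\setminus\{u\}}=rL_D+\alpha\rho_D$. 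This gives $SA_r=B_r+\frac{\alpha r}{\alpha-r}A_{r+1}$ with $B_r$ manifestly of $\mathcal Q$-type.
\item Telescope from $A_1=p_G$ down to $A_\alpha=0$ to obtain the certificate.
\end{enumerate}
Note that $L_{\{i,j\}}=(\alpha-2)S-\alpha\nu_{\{i,j\}}$ differs from the forms $(\alpha-2)e_{G_i}-(\alpha-1)\nu_{\{i,j\}}$ that recursing on $f^{\alpha-1}_{G_i}$ would produce. Using the global $S$ with the fixed coefficient $\alpha$ at every level is precisely what avoids the obstruction above.
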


We observe that the degree in this result is tight. Indeed, Vargas \cite{Vargas-thesis} (see also \cite{VargasVeraDickinson2025}) constructed a class of graphs $L_k$ (for $k\geq2$), with $\alpha(L_k)=k$, such that $M_{L_k}\notin\mathcal Q^{(k-2)}$.

\paragraph{Minimizers of the Motzkin-Straus formulation} Formulation (\ref{alpha-COP}) can be seen also as a reformulation of a classical result of Motzkin and Straus \cite{motzkin-straus}, who showed that
\[
\frac{1}{\alpha(G)} = \min\Big\{\, x^{\mathsf T}(I+A_G)x \;:\; x\in\Delta_n \,\Big\},
\]
where $\Delta_n=\{x\in\mathbb R^n_+ : \sum_i x_i=1\}$. The zeros of $x^{\mathsf T}M_Gx$ on $\Delta_n$ are exactly the minimizers of this program. These zeros form an obstruction to obtaining a certificate of membership of $M_G$ in $\mathcal Q_n^{(r)}$: if $z\in\Delta_n$ satisfies $p_G(z)=0$, then the right-hand side of the identity certifying that $M_G\in \MQ_n^{(r)}$ (see~\eqref{def-Q}) must also vanish at $z$. Since that right-hand side is a sum of manifestly nonnegative terms (weighted squares and nonnegative-coefficient polynomials), its vanishing forces \emph{every} individual term to vanish. This is a highly restrictive condition, satisfied only by very particular configurations of zeros. As we show in Section~\ref{sec:minimizers}, our identity (Theorem~\rev{\ref{thm:main-identity}}{\ref{main-identity}}) in fact allows us to recover a full characterization of these zeros directly, recovering a known structural description of the Motzkin--Straus minimizers~\cite{LV21a}. This same obstruction is exploited in~\cite{Vargas-thesis,VargasVeraDickinson2025} to construct graphs $L_k$ for which the degree needed for convergence is large, showing that the exponent $\alpha(G)-1$ in Theorem~\ref{thm:main-identity} cannot be improved in general.

\subsection{Hoffman--Pereira copositive matrices}

The second class of copositive matrices studied in this paper goes back to the work of Hoffman and Pereira~\cite{HoffmanPereira1973}. They characterized copositivity for symmetric matrices with unit diagonal and off-diagonal entries in $\{-1,0,1\}$. For such a matrix $A\in \mathcal{S}^n$, let $G_-(A)$ denote its \emph{negative-entry graph}, the graph on $[n]$ whose edges are the pairs $\{i,j\}$ with $a_{ij}=-1$. Hoffman and Pereira proved that $A$ is copositive if and only if $G_-(A)$ is triangle-free and $a_{ij}=1$ whenever $i$ and $j$ are at distance two in $G_-(A)$.

We refer to these requirements as the \emph{Hoffman--Pereira sign condition}. The smallest nontrivial instance is already a distinguished object: taking $G_-(A)$ to be the $5$-cycle $C_5$ yields the Horn matrix~\cite{Dia,HallNewman1963}, historically the first example of a copositive matrix that is not the sum of a positive semidefinite matrix and an entrywise nonnegative one or, equivalently, does not belong to $\MK_n^{(0)}$. These matrices are also relevant to the geometry of the copositive cone: Hoffman and Pereira characterized the extremal matrices in this class, and Hildebrand later showed that their diagonal scalings describe the extremal copositive matrices whose minimal zeros all have support of cardinality two~\cite{HoffmanPereira1973,Hildebrand2018}.

Our second contribution is a certificate-driven proof of the sufficiency part of the Hoffman--Pereira theorem. We prove that every matrix $A\in \mathcal{S}^n$ satisfying the Hoffman--Pereira sign condition belongs to $\widetilde{\MQ}_n^{(r)}$ for some $r\geq0$. Therefore, these matrices are not only copositive but also admit a sum-of-squares certificate of copositivity. The starting point is a local identity (Theorem~\ref{thm:local-identity}): for every principal submatrix $A[U]$, the product $S_U(x)\,p_U(x)$ of the quadratic form $p_U(x)=x_U^{\mathsf T}A[U]x_U$ with the linear form $S_U(x)=\sum_{i\in U}x_i$ decomposes into a sum of monomial-times-square terms, the analogous forms $p_{Z_U(i)}$ associated with the ``zero-neighborhoods'' $Z_U(i)=\{j\in U\setminus\{i\}: a_{ij}=0\}$, and a residual cubic that, under the sign condition, has only nonnegative coefficients. Iterating this identity yields, by induction, a sum-of-squares certificate for $A$.

We then analyze whether the cones $\MK^{(r)}$ and $\MQ^{(r)}$ certify matrices satisfying the Hoffman--Pereira sign condition. For every $n\geq7$, we construct an $n\times n$ matrix satisfying the sign condition that does \emph{not} belong to any cone $\MK^{(r)}$ (and hence to no $\MQ^{(r)}$). On the positive side, we prove that a matrix with at most four zeros per row belongs to $\MQ_n^{(1)}$. Consequently, the classical $7\times7$ Hoffman--Pereira matrix admits such a certificate.

\paragraph{Disclosure}
The identities of Theorem \ref{thm:main-identity} and Theorem \ref{thm:local-identity}, their proof strategies and most of the original proof text were produced by OpenAI's GPT-5.6 through ChatGPT in response to prompts from Jineon Baek. The authors checked the proofs, adapted the terminology, and substantially rewrote parts of the proofs. The authors take full responsibility for the content of this paper.
\section{Graph matrices \texorpdfstring{$M_G$}{M G} and the de Klerk--Pasechnik conjecture}
\label{sec:main-identity}

\subsection{Notation and main theorem}
\label{sec:notation}

We first introduce the notation used throughout the construction. Let $G=(V=[n],E)$ be a finite simple graph on vertex set $V=[n]$, and let
\[
\alpha := \alpha(G)
\]
be its stability number. Recall the graph matrix
\[
M_G = \alpha(I+A_G) - J
\]
from the Introduction, where $A_G$ is the adjacency matrix of $G$, $I$ is the identity matrix, and $J$ is the all-ones matrix. Its associated quadratic form is
\[
p_G(x) = x^{\mathsf T} M_G x
= \alpha\sum_{i\in V} x_i^2 + 2\alpha\!\!\sum_{\{i,j\}\in E}\!\! x_ix_j - S^2,
\qquad S := \sum_{i\in V} x_i.
\]

For $r\ge1$, let $\mathcal S_r(G)$ denote the set of \emph{stable sets} (independent sets) of $G$ of size $r$, i.e., subsets $T\subseteq V$ with $|T|=r$ such that no two vertices of $T$ are adjacent in $G$. When $G$ is clear from context we write simply $\mathcal S_r$. For a stable set $T$, write
\[
x_T = \prod_{i\in T} x_i.
\]

For a stable set $T$, define
\[
\nu_T(x) = \sum_{\substack{v\notin T \\ T\cup\{v\}\,\in\,\mathcal S_{|T|+1}}} x_v,
\quad \quad \text{ and} \quad \quad 
\rho_T(x) = \sum_{\substack{v\notin T \\ v \text{ adjacent to at }\\\text{least two vertices of } T}} x_v.
\]

For a stable set $T$ with $|T|=r$, define the \emph{residual}
\[
L_T(x) = (\alpha-r)\,S - \alpha\,\nu_T(x).
\]

Finally, define constants $\lambda_1=1$ and, recursively,
\[
\lambda_{r+1} = \lambda_r\,\frac{\alpha r}{\alpha-r},
\qquad\text{equivalently}\qquad
\lambda_r = \prod_{t=1}^{r-1}\frac{\alpha t}{\alpha-t}.
\]

We now prove an algebraic identity that shows Conjecture \ref{conj:dkp}.

\begin{theorem}
\label{thm:main-identity}
For every graph $G$ with $\alpha:=\alpha(G)\ge1$,
\[
S^{\alpha-1} p_G(x) = \sum_{r=1}^{\alpha-1} \lambda_r S^{\alpha-1-r}
\left[\frac{1}{\alpha-r}\sum_{T\in\mathcal S_r} x_T L_T(x)^2
+ \frac{\alpha^2}{\alpha-r}\sum_{D\in\mathcal S_{r+1}} x_D\,\rho_D(x)\right].
\]
In particular, $M_G\in \mathcal{Q}_n^{(\alpha-1)}$.
\end{theorem}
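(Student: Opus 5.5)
The plan is a telescoping recursion. Introduce, for $1\le r\le \alpha$, the forms
\[
E_r(x) := \sum_{T\in\mathcal S_r} x_T\,L_T(x),
\]
which are homogeneous of degree $r+1$. I will establish three facts: (i) $E_1=p_G$; (ii) $E_\alpha=0$; (iii) for $1\le r\le \alpha-1$,
\[
S\,E_r=\frac{1}{\alpha-r}\Big[\sum_{T\in\mathcal S_r}x_TL_T^2+\alpha^2\sum_{D\in\mathcal S_{r+1}}x_D\rho_D\Big]+\frac{\alpha r}{\alpha-r}\,E_{r+1}.
\]
Multiplying (iii) by $\lambda_rS^{\alpha-1-r}$ and using $\lambda_r\frac{\alpha r}{\alpha-r}=\lambda_{r+1}$, the $E$-terms telescope from $S^{\alpha-1}E_1$ down to $\lambda_\alpha S^{0}E_\alpha=0$. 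This yields exactly the stated identity. Membership in $\MQ_n^{(\alpha-1)}$ then follows by reading off the right-hand side: each term is a monomial $S^{\alpha-1-r}x_T$ (expanded, with nonnegative coefficients since $\lambda_r>0$ and $\alpha>r$) times a square, or a cubic-degree monomial expansion with nonnegative coefficients. These are of the shape in \eqref{def-Q}, the terms of degree $\alpha+1$ with square-free $\beta$ absorbing the constant SOS multipliers.

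Facts (i) and (ii) are direct checks. For (i), with $T=\{i\}$ we have $\nu_{\{i\}}=S-x_i-\sum_{j\in N(i)}x_j$. Hence
\[
x_iL_{\{i\}}=x_i\Big(-S+\alpha x_i+\alpha\sum_{j\in N(i)}x_j\Big),
\]
and summing over $i$ gives $\alpha\sum x_i^2+2\alpha\sum_{E}x_ix_j-S^2=p_G$. For (ii), a stable set of size $\alpha$ cannot be extended, so $\nu_T=0$; together with the coefficient $\alpha-\alpha=0$ this gives $L_T=0$.

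The main step is (iii). Write $x_TL_T^2=x_TL_T\big((\alpha-r)S-\alpha\nu_T\big)$. Summing over $T\in\mathcal S_r$ gives $(\alpha-r)S\,E_r-\alpha\sum_T x_T\nu_TL_T$. Expanding $\nu_T$, the last sum runs over pairs $(T,v)$ with $D=T\cup\{v\}\in\mathcal S_{r+1}$, so it equals $\sum_{D\in\mathcal S_{r+1}}x_D\sum_{v\in D}L_{D\setminus\{v\}}$. The key combinatorial lemma is then
\[
\sum_{v\in D}L_{D\setminus\{v\}}=r\,L_D+\alpha\,\rho_D .
\]
To prove it, I classify each vertex $u$ by its adjacency to $D$ to compute $\sum_{v\in D}\nu_{D\setminus\{v\}}$:
\begin{itemize}[label={}]
\item a vertex with no neighbour in $D$ (and not in $D$) is counted $r+1$ times;
\item each $v\in D$ is counted once, in $\nu_{D\setminus\{v\}}$;
\item a vertex with exactly one neighbour in $D$ is counted once;
\item a vertex with two or more neighbours in $D$ is never counted.
\end{itemize}
Hence $\sum_v\nu_{D\setminus\{v\}}=(r+1)\nu_D+(S-\nu_D-\rho_D)=r\nu_D+S-\rho_D$. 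Consequently $\sum_vL_{D\setminus\{v\}}=(r+1)(\alpha-r)S-\alpha(r\nu_D+S-\rho_D)$. Comparing with $rL_D=r(\alpha-r-1)S-\alpha r\nu_D$, the $S$-coefficients agree since $(r+1)(\alpha-r)-\alpha-r(\alpha-r-1)=0$, which proves the lemma. Substituting back gives $\sum_Tx_TL_T^2=(\alpha-r)SE_r-\alpha rE_{r+1}-\alpha^2\sum_Dx_D\rho_D$, which rearranges to (iii). I expect the bookkeeping in this vertex classification, especially the treatment of vertices of $D$ itself and of the $x_v$ contributions, to be the only delicate point.
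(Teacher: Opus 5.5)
Your proposal is correct and follows the paper's proof essentially step for step. Your $E_r$ is the paper's $A_r$, and facts (i)--(iii) are its lemmas $A_1=p_G$, $A_\alpha=0$ and the one-step recursion; your vertex classification is its combinatorial lemma, and your telescoping sum is its induction on $m$. The only slip is in wording and does not affect the argument: after expanding $S^{\alpha-1-r}$, the degree-$(\alpha+1)$ terms are arbitrary monomials with nonnegative coefficients. They are neither cubic nor necessarily square-free, and nonnegative coefficients are all that membership in $\mathcal Q_n^{(\alpha-1)}$ requires.
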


Note that if $\alpha(G)=1$, then $G$ is complete so $p_G(x) = 0$ and $M_G=0\in\mathcal Q_n^{(0)}$.

\subsection{Proof of Theorem \ref{thm:main-identity}}

We begin with a combinatorial identity that expresses the residual of a stable set in terms of the residuals of its one-element deletions.

\begin{lemma}
\label{lem:combinatorial}
For every independent set $D$ of $G$ with $|D|=r+1$,
\[
\sum_{u\in D} L_{D\setminus\{u\}}(x) = r\,L_D(x) + \alpha\,\rho_D(x).
\]
\end{lemma}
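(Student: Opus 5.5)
Since both sides are linear forms in $x$, I will verify the identity coefficient by coefficient. First I expand the left side from the definition $L_T=(\alpha-|T|)S-\alpha\nu_T$ with $|D\setminus\{u\}|=r$:
\[
\sum_{u\in D}L_{D\setminus\{u\}}(x)=(r+1)(\alpha-r)S-\alpha\sum_{u\in D}\nu_{D\setminus\{u\}}(x),
\]
while the right side is
\[
r(\alpha-r-1)S-r\alpha\,\nu_D(x)+\alpha\rho_D(x).
\]
The $S$-coefficients differ by $(r+1)(\alpha-r)-r(\alpha-r-1)=\alpha$. After dividing by $\alpha$, the claim therefore reduces to the pointwise identity
\[
\sum_{u\in D}\nu_{D\setminus\{u\}}(x)=S+r\,\nu_D(x)-\rho_D(x).
\]

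To prove this, I compare the coefficient of each $x_v$, $v\in V$, on both sides. I split into cases according to the relation of $v$ to $D$:
\begin{itemize}
\item $v\in D$;
\item $v\notin D$ with no neighbours in $D$;
\item $v\notin D$ with exactly one neighbour in $D$;
\item $v\notin D$ with at least two neighbours in $D$.
\end{itemize}
Recall that $x_v$ appears in $\nu_{D\setminus\{u\}}$ iff $v\notin D\setminus\{u\}$ and $(D\setminus\{u\})\cup\{v\}$ is stable, i.e.\ $v$ has no neighbour in $D\setminus\{u\}$.

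The case analysis goes as follows.
\begin{itemize}
\item If $v\in D$: $v$ appears in $\nu_{D\setminus\{u\}}$ only for $u=v$, since $D$ is stable and $v\in D\setminus\{u\}$ otherwise. The left coefficient is $1$; on the right, $\nu_D$ and $\rho_D$ omit $v$, so it is also $1$.
\item If $v\notin D$ has no neighbour in $D$: $v$ appears in every $\nu_{D\setminus\{u\}}$, giving $r+1$. On the right we get $1+r\cdot 1-0$.
\item If $v$ has exactly one neighbour $w\in D$: $v$ appears only for $u=w$, giving $1$. On the right we get $1+0-0$.
\item If $v$ has at least two neighbours in $D$: every $D\setminus\{u\}$ still contains a neighbour, giving $0$. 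On the right we get $1+0-1$.
\end{itemize}
All cases match, and this proves the lemma.

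There is no real obstacle here. The only care needed is the bookkeeping of the $S$-coefficient, which produces the extra $\alpha$ absorbed by the $S$ term, and remembering that vertices of $D$ itself contribute through $S$ and the $u=v$ deletion.
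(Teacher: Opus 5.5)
Your proposal is correct and follows essentially the same route as the paper. You reduce the lemma to the identity $\sum_{u\in D}\nu_{D\setminus\{u\}}(x)=S+r\,\nu_D(x)-\rho_D(x)$ and verify it coefficient by coefficient using the same four cases for $v$. The only difference is order: the paper establishes this identity first and then substitutes it, expanding $(r+1)(\alpha-r)-\alpha=r(\alpha-r-1)$, whereas you compare the $S$-coefficients first.
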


\begin{proof}
Recall that  $L_{D\setminus\{u\}}(x)= (\alpha-r)S-\alpha\nu_{D\setminus\{u\}}(x)$. We first compute $\sum_{u\in D}\nu_{D\setminus\{u\}}(x)$. Since this is linear in $x$, it suffices to compute
the coefficient of $x_v$ for each vertex $v$, namely the number of sets
$D\setminus\{u\}$ ($u\in D$) that $v$ extends to an independent set.
\begin{itemize}
\item \textbf{$v\in D$.} Removing $u=v$ from $D$ is the only removal that can place
$v$ back into the resulting set; indeed $D\setminus\{u\}\cup\{v\}$ equals $D$ itself
precisely when $u=v$, which is independent, while for $u\neq v$ the vertex $v$ is
already present in $D\setminus\{u\}$. So $v$ extends exactly one of the $r+1$ sets.
Coefficient $1$.

\item $v\notin D$ and $v$ is not adjacent to any vertex of $D$. Then removing
any single vertex $u$ from $D$ still leaves $v$ nonadjacent to all of
$D\setminus\{u\}$, so $v$ extends every one of the $r+1$ sets. Coefficient $r+1$.
These are exactly the vertices counted in $\nu_D(x)$.

\item $v\notin D$ and $v$ is adjacent to exactly one vertex of $D$. Removing
that one vertex is the only way to make $v$ nonadjacent to all of $D\setminus\{u\}$,
so $v$ extends exactly one such set. Coefficient $1$.

\item $v\notin D$ and $v$ is adjacent to at least two vertices of $D$. No
single removal can eliminate every adjacency, so $v$ extends none of the
$D\setminus\{u\}$. Coefficient $0$. These are exactly the vertices counted in
$\rho_D(x)$.
\end{itemize}
Every vertex falls into exactly one case, and $S$ counts every vertex once, so
\begin{equation}
\sum_{u\in D}\nu_{D\setminus\{u\}}(x) = S + r\,\nu_D(x) - \rho_D(x).
\label{eq:ddagger}
\end{equation}
Since $|D\setminus\{u\}|=r$ for each $u\in D$, we have
$L_{D\setminus\{u\}}(x) = (\alpha-r)S - \alpha\,\nu_{D\setminus\{u\}}(x)$. Summing this over the
$r+1$ elements $u\in D$, the term $(\alpha-r)S$ does not depend on $u$ and is simply
repeated $r+1$ times, giving
\[
\sum_{u\in D} L_{D\setminus\{u\}}(x)
= (r+1)(\alpha-r)S - \alpha\sum_{u\in D}\nu_{D\setminus\{u\}}(x).
\]
Substituting \eqref{eq:ddagger} for the remaining sum,
\begin{align*}
\sum_{u\in D} L_{D\setminus\{u\}}(x)
= &(r+1)(\alpha-r)S - \alpha\big(S + r\,\nu_D(x) - \rho_D(x)\big) \\
= &\big[(r+1)(\alpha-r)-\alpha\big]S - \alpha r\,\nu_D(x) + \alpha\rho_D(x).
\end{align*}
Finally, expanding $(r+1)(\alpha-r)-\alpha = \alpha r-r^2+\alpha-r-\alpha = r(\alpha-r-1)$ gives
\[
\sum_{u\in D} L_{D\setminus\{u\}}(x)
= r(\alpha-r-1)S - \alpha r\,\nu_D(x) + \alpha\rho_D(x).
\]
Since $|D|=r+1$, $L_D(x) = (\alpha-r-1)S - \alpha\nu_D(x)$, so the right-hand side equals
$r\,L_D(x) + \alpha\,\rho_D(x)$.
\end{proof}

With Lemma~\ref{lem:combinatorial} in hand, we can now aggregate the residual $L_T$
over all independent sets of a fixed size and relate consecutive sizes. For
$1\le r\le \alpha$, define
\begin{align}
\label{def:A_r}
A_r(x) \;=\; \sum_{T\in\mathcal S_r(G)} x_T\, L_T(x).
\end{align}

We observe that the base case $A_1$ recovers the original quadratic form $p_G(x)$ and that $A_\alpha(x)=0$.
\begin{lemma}
\label{lem:A1}
We have that $A_1(x) = p_G(x)$, and $A_\alpha(x)=0$.
\end{lemma}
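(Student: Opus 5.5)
Both claims follow by direct expansion from the definitions of $L_T$, $\nu_T$ and $A_r$ in \eqref{def:A_r}, with no appeal to Lemma~\ref{lem:combinatorial}; I treat the two statements separately.

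\emph{The case $A_1=p_G$.} For $r=1$ the stable sets are the singletons $T=\{i\}$. Here $\nu_{\{i\}}(x)=\sum_{v\neq i,\ v\not\sim i}x_v$ is the sum over non-neighbours of $i$, and $L_{\{i\}}=(\alpha-1)S-\alpha\nu_{\{i\}}$. I would rewrite $\nu_{\{i\}}$ using the partition of $V$ into $\{i\}$, the neighbours of $i$, and the non-neighbours of $i$:
\[
\nu_{\{i\}}=S-x_i-N_i,\qquad N_i:=\sum_{v\sim i}x_v .
\]
This gives
\[
L_{\{i\}}=(\alpha-1)S-\alpha S+\alpha x_i+\alpha N_i=-S+\alpha x_i+\alpha N_i .
\]
Multiplying by $x_i$ and summing over $i$ then yields
\[
A_1=-S^2+\alpha\sum_i x_i^2+\alpha\sum_i x_iN_i .
\]
The last sum counts each edge twice, so it equals $2\alpha\sum_{\{i,j\}\in E}x_ix_j$. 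The result is exactly the expression for $p_G$ given in Section~\ref{sec:notation}.

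\emph{The case $A_\alpha=0$.} For $|T|=\alpha$ the residual is $L_T=(\alpha-\alpha)S-\alpha\nu_T=-\alpha\nu_T$. By maximality of $\alpha$, no vertex $v\notin T$ can make $T\cup\{v\}$ a stable set of size $\alpha+1$. Hence $\nu_T=0$, so $L_T=0$ for every $T\in\mathcal S_\alpha$, and therefore $A_\alpha=0$.

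The only step needing care is the bookkeeping in the first case. One must make sure that $i$ itself is excluded from $\nu_{\{i\}}$, which is guaranteed by the condition $v\notin T$, and that the factor $2$ on the edge terms comes out correctly. Neither step is a real obstacle.
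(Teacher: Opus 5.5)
Your proposal is correct and follows essentially the same direct-expansion argument as the paper. The only cosmetic difference is in the $A_1$ case: you substitute $\nu_{\{i\}}=S-x_i-N_i$ and land directly on $p_G=\alpha\sum_i x_i^2+2\alpha\sum_{\{i,j\}\in E}x_ix_j-S^2$, whereas the paper writes $A_1=(\alpha-1)S^2-2\alpha\sum_{\{i,j\}\notin E}x_ix_j$ and expands both sides over edges and non-edges; your argument for $A_\alpha=0$ is identical to the paper's.
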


\begin{proof}

We first show that $A_1(x)=p_G(x)$. For $r=1$, each $T=\{i\}$ is a single vertex, and $\nu_{\{i\}}(x)=\sum_{j\neq i:\,\{i,j\}\notin E(G)}x_j$,
so
\[
L_{\{i\}}(x) = (\alpha-1)S - \alpha\sum_{j\not\sim i} x_j.
\]
Summing $x_i L_{\{i\}}(x)$ over all $i$ and using that each non-edge $\{i,j\}\notin
E(G)$ is counted twice in $\sum_i x_i\sum_{j\not\sim i}x_j$,
\[
A_1(x) = (\alpha-1)S^2 - 2\alpha\sum_{\{i,j\}\notin E(G)} x_ix_j.
\]
On the other hand, expanding $p_G(x)=x^{\mathsf T}M_Gx$ from $M_G=\alpha(I+A_G)-J$ and writing
$S^2=\sum_i x_i^2+2\sum_{\{i,j\}\in E(G)}x_ix_j+2\sum_{\{i,j\}\notin E(G)}x_ix_j$, both
expressions reduce to
\[
(\alpha-1)\sum_i x_i^2 + 2(\alpha-1)\sum_{\{i,j\}\in E(G)} x_ix_j - 2\sum_{\{i,j\}\notin E(G)} x_ix_j,
\]
so $A_1(x)=p_G(x)$.

We now show $A_\alpha(x)=0$. Let $T\in\mathcal S_\alpha(G)$. Since $\alpha=\alpha(G)$, no vertex $v\notin T$ can be added to $T$ while
keeping it stable, so $\nu_T(x)=0$ and hence
\[
L_T(x) = (\alpha-\alpha)S - \alpha\,\nu_T(x) = 0.
\]
As this holds for every $T\in\mathcal S_\alpha(G)$, we get $A_\alpha(x)=\sum_{T\in\mathcal S_\alpha(G)} x_T\,L_T(x)=0$.
\end{proof}
The following lemma gives the one-step recursion relating $A_r$ and $A_{r+1}$.
\begin{lemma}
\label{lem:one-step}
For every $1\le r\le \alpha-1$,
\[
S\,A_r \;=\; \frac{1}{\alpha-r}\sum_{T\in\mathcal S_r(G)} x_T\, L_T(x)^2
\;+\; \frac{\alpha r}{\alpha-r}\, A_{r+1}
\;+\; \frac{\alpha^2}{\alpha-r}\sum_{D\in\mathcal S_{r+1}(G)} x_D\, \rho_D(x).
\]
\end{lemma}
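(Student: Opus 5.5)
Multiply the identity through by $(\alpha-r)$ and prove the equivalent form
\[
(\alpha-r)\,S\,A_r \;=\; \sum_{T\in\mathcal S_r} x_T L_T^2 \;+\; \alpha r\,A_{r+1} \;+\; \alpha^2\sum_{D\in\mathcal S_{r+1}} x_D\rho_D .
\]
The key is to rewrite $(\alpha-r)S$ using the definition of the residual, $(\alpha-r)S = L_T + \alpha\,\nu_T$, for each fixed $T\in\mathcal S_r$. This gives
\[
(\alpha-r)\,S\,A_r=\sum_{T\in\mathcal S_r} x_T L_T\big(L_T+\alpha\nu_T\big)=\sum_{T\in\mathcal S_r} x_T L_T^2+\alpha\sum_{T\in\mathcal S_r} x_T\,\nu_T\,L_T .
\]
The first sum is already the square term, so it remains to show
\[
\sum_{T\in\mathcal S_r} x_T\nu_T L_T = r A_{r+1}+\alpha\sum_{D\in\mathcal S_{r+1}} x_D\rho_D .
\]

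For this, expand $x_T\nu_T=\sum_{v} x_{T\cup\{v\}}$, where $v$ ranges over the vertices with $T\cup\{v\}\in\mathcal S_{r+1}$. Each $D\in\mathcal S_{r+1}$ then arises exactly once for each choice of $u\in D$, as $T=D\setminus\{u\}$ with $v=u$. Reindexing the double sum by pairs $(D,u)$ gives
\[
\sum_{T\in\mathcal S_r} x_T\nu_T L_T=\sum_{D\in\mathcal S_{r+1}} x_D\sum_{u\in D} L_{D\setminus\{u\}} .
\]
Lemma~\ref{lem:combinatorial} replaces the inner sum by $rL_D+\alpha\rho_D$. Summing over $D$ gives $rA_{r+1}+\alpha\sum_D x_D\rho_D$, by the definition \eqref{def:A_r} of $A_{r+1}$.

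Substituting back yields the displayed identity, and dividing by $\alpha-r>0$ (valid since $r\le\alpha-1$) gives the lemma. The main point requiring care is the reindexing bijection $(T,v)\leftrightarrow(D,u)$. It is bijective because $\nu_T$ counts exactly the vertices $v\notin T$ whose addition to $T$ keeps it stable, and removing any element $u$ of a stable set $D$ leaves a stable set. Once this is in place, the rest is bookkeeping, with Lemma~\ref{lem:combinatorial} supplying the combinatorial content.
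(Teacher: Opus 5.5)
Your proposal is correct and follows essentially the same route as the paper. Both rewrite $S$ (you use $(\alpha-r)S$) as $L_T+\alpha\nu_T$, split off the square term, reindex the $\nu_T$-sum via the bijection $(T,v)\leftrightarrow(D,u)$, and apply Lemma~\ref{lem:combinatorial}.
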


\begin{proof}
Fix an independent set $T$ with $|T|=r$. By definition,
\[
L_T(x) = (\alpha-r)S - \alpha\,\nu_T(x),
\]
and since $r<\alpha$ this rearranges to
\[
S = \frac{1}{\alpha-r}L_T(x) + \frac{\alpha}{\alpha-r}\nu_T(x).
\]
Multiplying by $x_T L_T(x)$ and summing over $T\in\mathcal S_r(G)$ gives
\begin{equation}
S A_r = \frac{1}{\alpha-r}\sum_{T\in\mathcal S_r(G)} x_T L_T(x)^2
+ \frac{\alpha}{\alpha-r}\sum_{T\in\mathcal S_r(G)} x_T\,\nu_T(x)\,L_T(x).
\label{eq:star}
\end{equation}
By definition of $\nu_T$, we have
\[
x_T\,\nu_T(x) = \sum_{\substack{v\notin T \\ T\cup\{v\}\in\mathcal S_{r+1}(G)}} x_T x_v.
\]
Then, we have that

\[\sum_{T\in\mathcal S_r(G)} x_T\,\nu_T(x)\,L_T(x)
= \sum_{T\in\mathcal S_r(G)}\ \sum_{\substack{v\notin T \\ T\cup\{v\}\in\mathcal S_{r+1}(G)}} x_T x_v\, L_T(x)\]
Every pair $(T,v)$ appearing in the last sum (with $T\in\mathcal S_r(G)$, $v\notin T$,
and $T\cup\{v\}\in\mathcal S_{r+1}(G)$) determines a set
$D:=T\cup\{v\}\in\mathcal S_{r+1}(G)$ together with a distinguished element
$u:=v\in D$, and $T=D\setminus\{u\}$. Conversely, any $D\in\mathcal I_{r+1}(G)$
together with any choice of $u\in D$ gives back a valid pair
$(T,v)=(D\setminus\{u\},\,u)$: indeed $T=D\setminus\{u\}$ has size $r$, $v=u\notin T$,
and $T\cup\{v\}=D$ is independent of size $r+1$. These two maps are inverse to each
other, so summing over pairs $(T,v)$ is the same as summing over pairs $(D,u)$ with
$D\in\mathcal S_{r+1}(G)$, $u\in D$:
\[
\sum_{T\in\mathcal S_r(G)}\ \sum_{\substack{v\notin T \\ T\cup\{v\}\in\mathcal S_{r+1}(G)}} x_T x_v\, L_T(x)
= \sum_{D\in\mathcal S_{r+1}(G)}\ \sum_{u\in D} x_{D\setminus\{u\}}\,x_u\,L_{D\setminus\{u\}}(x).
\]
Since $D\setminus\{u\}$ and $\{u\}$ are disjoint and their union is $D$, we have
$x_{D\setminus\{u\}}\,x_u = x_D$, so
\[
\sum_{D\in\mathcal S_{r+1}(G)}\ \sum_{u\in D} x_{D\setminus\{u\}}\,x_u\,L_{D\setminus\{u\}}(x)
= \sum_{D\in\mathcal S_{r+1}(G)} x_D \sum_{u\in D} L_{D\setminus\{u\}}(x).
\]
By Lemma~\ref{lem:combinatorial}, the inner sum equals $r\,L_D(x)+\alpha\,\rho_D(x)$, so
\begin{align*}
\sum_{D\in\mathcal S_{r+1}(G)} x_D \sum_{u\in D} L_{D\setminus\{u\}}(x)
&= \sum_{D\in\mathcal S_{r+1}(G)} x_D\big(r\,L_D(x) + \alpha\,\rho_D(x)\big)
\\&= r\sum_{D\in\mathcal S_{r+1}(G)} x_D\,L_D(x) + \alpha\sum_{D\in\mathcal S_{r+1}(G)} x_D\,\rho_D(x).
\end{align*}
By definition of $A_{r+1}$, the first sum on the right is $A_{r+1}(x)$, so
\[
\sum_{T\in\mathcal S_r(G)} x_T\,\nu_T(x)\,L_T(x) = r\,A_{r+1}(x) + \alpha\sum_{D\in\mathcal S_{r+1}(G)} x_D\,\rho_D(x).
\]
Substituting into \eqref{eq:star} gives the claimed identity.
\end{proof}

Finally, we iterate the identity of Lemma~\ref{lem:one-step} to relate $A_1$
directly to $A_{m+1}$, for any intermediate level $m$. It is convenient to first
package the two nonnegative terms appearing in the one-step identity into a single
quantity. For $1\le r\le \alpha-1$, define
\begin{align}\label{def:B_r}
B_r(x) = \frac{1}{\alpha-r}\sum_{T\in\mathcal S_r(G)} x_T\, L_T(x)^2
\;+\; \frac{\alpha^2}{\alpha-r}\sum_{D\in\mathcal S_{r+1}(G)} x_D\,\rho_D(x).
\end{align}

With this notation, the one-step identity of Lemma~\ref{lem:one-step} reads, for
$1\le r\le \alpha-1$,
\[
S A_r = B_r + \frac{\alpha r}{\alpha-r}\, A_{r+1}.
\]
Recall also the constants $\lambda_1=1$ and $\lambda_{r+1}=\lambda_r\,\frac{\alpha r}{\alpha-r}$,
so that $\frac{\alpha r}{\alpha-r} = \frac{\lambda_{r+1}}{\lambda_r}$, and the one-step identity
can be rewritten as
\begin{equation}
S A_r = B_r + \frac{\lambda_{r+1}}{\lambda_r}\, A_{r+1}, \qquad 1\le r\le \alpha-1.
\label{eq:one-step-lambda}
\end{equation}

We now iterate \eqref{eq:one-step-lambda} starting from $A_1$.

\begin{proposition}
\label{prop:iterated}
For every $1\le m\le \alpha-1$,
\[
S^m A_1(x) = \sum_{r=1}^{m} \lambda_r\, S^{m-r}\, B_r(x) \;+\; \lambda_{m+1}\, A_{m+1}(x).
\]
\end{proposition}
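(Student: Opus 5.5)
The plan is a straightforward induction on $m$, using the one-step identity \eqref{eq:one-step-lambda} (that is, Lemma~\ref{lem:one-step} rewritten with the constants $\lambda_r$) as the only input.

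\textbf{Base case $m=1$.} Taking $r=1$ in \eqref{eq:one-step-lambda} and using $\lambda_1=1$ gives
\[
S A_1 = B_1 + \lambda_2 A_2 .
\]
This is exactly the claimed formula for $m=1$: the sum has the single term $\lambda_1 S^0 B_1$, and the remainder is $\lambda_2 A_2$.

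\textbf{Inductive step.} Assume the formula holds for some $m$ with $1\le m\le \alpha-2$, and multiply both sides by $S$. This gives
\[
S^{m+1}A_1=\sum_{r=1}^m \lambda_r S^{m+1-r}B_r+\lambda_{m+1}\,S A_{m+1}.
\]
Since $m+1\le\alpha-1$, we may apply \eqref{eq:one-step-lambda} with $r=m+1$:
\[
S A_{m+1}=B_{m+1}+\frac{\lambda_{m+2}}{\lambda_{m+1}}A_{m+2}.
\]
Multiplying this by $\lambda_{m+1}$ produces the new summand $\lambda_{m+1}S^{0}B_{m+1}$, which extends the sum to $r=m+1$ with the correct power $S^{(m+1)-r}$. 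It also produces the remainder $\lambda_{m+2}A_{m+2}$. Together these give the formula for $m+1$.

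There is no real obstacle here. The only points needing care are the index ranges and the well-definedness of the constants. The recursion \eqref{eq:one-step-lambda} may only be invoked for $r\le\alpha-1$, which is why $m$ is capped at $\alpha-1$ and the inductive step runs only up to $m\le \alpha-2$. The ratio $\lambda_{m+2}/\lambda_{m+1}$ must make sense, which holds because every $\lambda_r$ with $r\le\alpha$ is a finite positive number: each factor $\alpha t/(\alpha-t)$ with $t\le\alpha-1$ is positive and finite.

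As a remark for the next step of the paper, setting $m=\alpha-1$ and using Lemma~\ref{lem:A1} ($A_1=p_G$ and $A_\alpha=0$) yields Theorem~\ref{thm:main-identity}.
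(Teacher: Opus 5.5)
Your proposal is correct and follows essentially the same route as the paper's proof: induction on $m$, with the base case given by the one-step identity \eqref{eq:one-step-lambda} at $r=1$, and the inductive step obtained by multiplying by $S$ and applying \eqref{eq:one-step-lambda} at $r=m+1\le\alpha-1$. Your added check that each $\lambda_r$ ($r\le\alpha$) is finite and positive is a harmless extra that the paper leaves implicit.
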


\begin{proof}
We argue by induction on $m$.

For $m=1$, since $\lambda_1=1$, the identity \eqref{eq:one-step-lambda} at
$r=1$ reads
\[
S A_1 = B_1 + \frac{\lambda_2}{\lambda_1} A_2 = \lambda_1 S^0 B_1 + \lambda_2 A_2,
\]
which is exactly the claimed identity for $m=1$.

Suppose the identity holds for some $m$ with
$1\le m\le \alpha-2$, so that
\[
S^m A_1(x) = \sum_{r=1}^{m} \lambda_r S^{m-r} B_r(x) + \lambda_{m+1} A_{m+1}(x).
\]
Multiplying both sides by $S$,
\[
S^{m+1} A_1(x) = \sum_{r=1}^{m} \lambda_r S^{m+1-r} B_r(x) + \lambda_{m+1}\, S A_{m+1}(x).
\]
Since $m+1\le \alpha-1$, identity \eqref{eq:one-step-lambda} applies at $r=m+1$:
\[
S A_{m+1} = B_{m+1} + \frac{\lambda_{m+2}}{\lambda_{m+1}} A_{m+2}.
\]
Multiplying by $\lambda_{m+1}$,
\[
\lambda_{m+1} S A_{m+1}(x) = \lambda_{m+1} B_{m+1}(x) + \lambda_{m+2} A_{m+2}(x).
\]
Substituting this into the previous gives
\begin{align*}
S^{m+1} A_1(x) &= \sum_{r=1}^{m} \lambda_r S^{m+1-r} B_r(x) + \lambda_{m+1} B_{m+1}(x) + \lambda_{m+2} A_{m+2}(x) \\
&= \sum_{r=1}^{m+1} \lambda_r S^{m+1-r} B_r(x) + \lambda_{m+2} A_{m+2}(x),
\end{align*}
which is the claimed identity for $m+1$.
\end{proof}
As a direct consequence we obtain the proof of our main result Theorem \ref{thm:main-identity}.
\begin{proof}[Proof of Theorem~\ref{thm:main-identity}]
Taking $m=\alpha-1$ in Proposition~\ref{prop:iterated} gives
\[
S^{\alpha-1} A_1(x) = \sum_{r=1}^{\alpha-1} \lambda_r S^{\alpha-1-r} B_r(x) + \lambda_\alpha A_\alpha(x).
\]
By Lemma~\ref{lem:A1}, $A_1=p_G$ and $A_\alpha=0$, so the last term vanishes and
\[
S^{\alpha-1} p_G(x) = \sum_{r=1}^{\alpha-1} \lambda_r S^{\alpha-1-r} B_r(x).
\]
Expanding $B_r$ according to \eqref{def:B_r} gives exactly the identity of Theorem~\ref{thm:main-identity}.
To verify membership in $\mathcal Q_n^{(\alpha-1)}$, expand each factor $S^{\alpha-1-r}$ in the displayed identity. Its coefficients are nonnegative. The first family of terms is therefore a sum of monomials of degree $\alpha-1$ times squares of linear forms, while the second family is a homogeneous polynomial of degree $\alpha+1$ with nonnegative coefficients. This is precisely a decomposition of the form required in~\eqref{def-Q}.
\end{proof}

\begin{remark}
In \cite{GL07} (resp. \cite{PVZ07}), the authors prove that the following inequality holds for $1 \leq r \leq \min(\alpha(G)-1, 6)$ and for $r = \alpha(G) - 1 = 7$ (resp. for $r \leq \min(\alpha(G)-1, 5)$):
\begin{equation}\label{ineq-GL}
\vartheta^{(r)}(G)
\leq
r+
\max_{ S\in \mathcal{S}_r}
\vartheta^{(0)}(G\setminus S^\perp),    
\end{equation}
where $S^\perp = \{i \in V : i\in S \text{ or }\{i, j\}\in E \text{ for some }j \in  S\}$ is the extended neighborhood of $S$. This shows Conjecture \ref{conj:dkp} for $\alpha(G) \leq 8$ (resp. $\alpha(G) \leq 6$). Indeed, for $r=\alpha(G)-1$, we have that, for $S\in \mathcal{S}_r$, the graph $G\setminus S^\perp$ is either the empty graph or a clique, and thus $\vartheta^{(0)}(G\setminus S^\perp)\leq 1$. 

The proofs of (\ref{ineq-GL}) in \cite{GL07} and \cite{PVZ07}, for the values of $r$ specified above, rely on a recursive construction of sum-of-squares certificates, in which a certain parameter must satisfy a polynomial inequality. This constraint prevents it from showing the conjecture for graphs with $\alpha(G) \geq 9$.

It is worth noting that inequality \eqref{ineq-GL} provides information beyond the terminal level of the hierarchy, and can in fact give useful bounds on the intermediate relaxations $\vartheta^{(r)}(G)$ as well. Our recursive construction, by contrast, shows the conjecture in full, but does give analogous bound at intermediate levels $r \leq \alpha(G) - 2$.
\end{remark}

\subsection{Minimizers of the Motzkin-Straus Formulation}
\label{sec:minimizers}
The copositive formulation of $\alpha(G)$ in~\eqref{alpha-COP} can also be viewed as a reformulation of the following quadratic program of Motzkin and Straus~\cite{motzkin-straus}:

\begin{align}\label{motzkin-straus}
    \frac{1}{\alpha(G)}=\min\Big\{x^{\mathsf T}(A_G+I)x \; \ : \  x\in \mathbb{R}_+^n, \ \sum_{i=1}^n x_i=1\Big\}.
\end{align}

Laurent and Vargas~\cite{LV21a} characterize the minimizers of~\eqref{motzkin-straus}. These minimizers are precisely the zeros of $p_G=x^{\mathsf T}M_Gx$ in the standard simplex $\Delta_n=\{x\in \mathbb{R}_+^n: \sum_{i=1}^n x_i=1\}$. We can recover their characterization from Theorem~\ref{thm:main-identity}.

As mentioned in the introduction, the zeros of $p_G$ on $\Delta_n$ form an obstruction to obtaining a certificate of membership of $M_G$ in $\mathcal Q_n^{(r)}$: if $z\in\Delta_n$ satisfies $p_G(z)=0$, then the right-hand side of the identity in Theorem~\ref{thm:main-identity} must also vanish at $z$. Since that right-hand side is a sum of manifestly nonnegative terms (weighted squares and nonnegative-coefficient polynomials), its vanishing forces \emph{every} individual term to vanish, a highly restrictive condition. The structure of the zero set of $p_G$ is therefore encoded in the vanishing terms of the identity, and this suffices to recover the minimizer characterization below.

\begin{theorem}[\cite{LV21a}]
\label{thm:motzkin-straus-zeros}
Let $G=(V,E)$ be a graph, let $x^*\in\Delta_n$, and let
$S=\operatorname{Supp}(x^*)=\{i:x_i^*>0\}$.
Let $C_1,\ldots,C_m$ be the connected components of $G[S]$. Then $x^*$ is a zero of $x^{\mathsf T}M_Gx$ (equivalently, a minimizer of the Motzkin--Straus program~\eqref{motzkin-straus}) if and only if:
\begin{enumerate}
    \item each $C_h$ is a clique of $G$,
    \item $m=\alpha(G)$,
    \item $\displaystyle\sum_{i\in C_h}x_i^*
        =
        \frac{1}{\alpha(G)}
        \qquad\text{for all }h\in[m].$
\end{enumerate}
\end{theorem}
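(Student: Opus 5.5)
The plan is to prove the two directions separately, using the identity of Theorem~\ref{thm:main-identity} for the hard direction (necessity). Sufficiency is a direct computation. Suppose the support splits into $\alpha$ components $C_1,\dots,C_\alpha$, each a clique, with each mass $\sum_{i\in C_h}x_i^*=1/\alpha$. Since these are the components of $G[S]$, there are no edges between different $C_h$. So $x^{*\mathsf T}(I+A_G)x^*$ computed on the support equals $\sum_h(\sum_{i\in C_h}x_i^*)^2=\alpha\cdot\alpha^{-2}=1/\alpha$, and hence $p_G(x^*)=\alpha\cdot\frac1\alpha-1=0$.

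For necessity, take $x^*\in\Delta_n$ with $p_G(x^*)=0$ and evaluate the identity of Theorem~\ref{thm:main-identity} at $x^*$. The left side vanishes and $S(x^*)=1$, so every term on the right vanishes. In particular, for every $r\le\alpha-1$ and every stable $T\in\mathcal S_r$ with $T\subseteq S$ (so $x_T^*>0$), we get $L_T(x^*)=0$, i.e.\ $\alpha\,\nu_T(x^*)=\alpha-r$. Also, for every stable $D\in\mathcal S_{r+1}$ with $D\subseteq S$, we get $\rho_D(x^*)=0$. The case $\alpha=1$ is trivial: $G$ is complete and everything is immediate. So assume $\alpha\ge2$.

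The key steps, in order, are as follows.

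\textbf{(a) Components of $G[S]$ are cliques.} Suppose $i,k\in S$ are nonadjacent but have a common neighbour $v\in S$, taken on a shortest path, so that an induced path $i-v-k$ exists inside a component. Then $D=\{i,k\}$ is stable with $x_D^*>0$, and $v$ is adjacent to both vertices of $D$, so $\rho_D(x^*)\ge x_v^*>0$. This contradicts vanishing at level $r=1$, which applies since $\alpha\ge2$. Hence adjacency within the support is transitive on connected pieces, and each component is a clique.

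\textbf{(b) Masses from the $r=1$ condition.} For $i\in C_h$, the condition $L_{\{i\}}(x^*)=0$ gives $\nu_{\{i\}}(x^*)=(\alpha-1)/\alpha$. Here $\nu_{\{i\}}(x^*)$ is the mass of the support vertices that are non-neighbours of $i$, namely $1-\sum_{j\in C_h}x_j^*$, because $C_h$ is a clique containing $i$ and other components are nonadjacent to $i$. So each component has mass $1/\alpha$.

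\textbf{(c) Count components.} Since the total mass is $1$, the number of components is $m=\alpha$. Alternatively, $m\le\alpha$ because picking one vertex per component gives a stable set, and the mass count forces equality.

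The main obstacle is step (a): making sure the term $x_D\rho_D$ really appears with a positive coefficient at level $r=1$ when $\alpha\ge2$. From the displayed identity the coefficient is $\lambda_1\alpha^2/(\alpha-1)>0$, multiplied by $S^{\alpha-2}$, which equals $1$ at $x^*$, so this is fine. The other point requiring care is noting that in (b) only support vertices contribute to $\nu$ when it is evaluated at $x^*$.
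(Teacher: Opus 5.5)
Your proof is correct and follows the paper's argument for the ``only if'' direction step for step: the vanishing of the $x_D\rho_D$ terms at level $r=1$ rules out induced paths on three vertices in $G[S]$, the vanishing of $L_{\{i\}}(x^*)$ gives each clique mass $1/\alpha$, and the total mass then forces $m=\alpha$. You also write out the converse computation, which the paper only cites, and you treat $\alpha=1$ separately; this is a genuine improvement, because for $\alpha=1$ the right-hand side of the identity is empty and the paper's argument does not literally apply.
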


We prove the ``only if'' direction directly from Theorem~\ref{thm:main-identity}. The converse is a standard verification: a clique partition with equal weight $1/\alpha(G)$ on each clique attains the value $1/\alpha(G)$, and can be found in~\cite{LV21a}.

\begin{proof}
Since $x^*$ is a zero of $x^{\mathsf T}M_Gx$ on $\Delta_n$, we have
\[
\sum_i x_i^*=1.
\]
Hence, evaluating Theorem~\ref{thm:main-identity} at $x^*$, the left-hand side is zero. Every term on the right-hand side is nonnegative, since $\lambda_r>0$, $S(x^*)=1$, $x_T^*\geq0$, $L_T(x^*)^2\geq0$, $x_D^*\geq0$, and $\rho_D(x^*)\geq0$. Therefore, every individual term on the right-hand side must vanish.

We first show that every component of $G[S]$ is a clique. It suffices to show that $G[S]$ contains no induced path on three vertices. Suppose that $u,w,v\in S$ satisfy $\{u,w\},\{w,v\}\in E
$ and $\{u,v\}\notin E.$
Then $D=\{u,v\}$ is a stable set, so $D\in\mathcal S_2$. Moreover, $x_D^*=x_u^*x_v^*>0.$
Since $w\notin D$ is adjacent to both vertices of $D$, we have $\rho_D(x^*)\geq x_w^*>0.$
Thus $x_D^*\rho_D(x^*)>0,$
contradicting the fact that every term in the second sum of Theorem~\ref{thm:main-identity} must vanish. Therefore, $G[S]$ contains no induced path on three vertices, and hence every connected component of $G[S]$ is a clique. This proves (1).

Next, fix a component $C_h$ and a vertex $i\in C_h$. Since $C_h$ is a clique, the vertices of $C_h$ are all adjacent to $i$. Therefore, for the singleton stable set $T=\{i\}\in\mathcal S_1$,
\[
\nu_{\{i\}}(x^*)
=
\sum_{\substack{v\neq i\\ \{i,v\}\notin E(G)\\ v\in S}}x_v^*
=
1-\sum_{j\in C_h}x_j^*.
\]
The vanishing of the term corresponding to $T=\{i\}$ in the first sum of Theorem~\ref{thm:main-identity} gives $x_{\{i\}}^*L_{\{i\}}(x^*)^2=0.$
Since $x_i^*>0$, it follows that $L_{\{i\}}(x^*)=0.$ By the definition of $L_{\{i\}}$, this means $(\alpha(G)-1)-\alpha(G)\nu_{\{i\}}(x^*)=0.$
Hence
\[
\nu_{\{i\}}(x^*)
=
\frac{\alpha(G)-1}{\alpha(G)}.
\]
Combining this with $\nu_{\{i\}}(x^*)
=
1-\sum_{j\in C_h}x_j^*$
gives
\[
\sum_{j\in C_h}x_j^*
=
\frac{1}{\alpha(G)}.
\]
Since $h$ was arbitrary, this proves (3).

Finally, since the components $C_1,\ldots,C_m$ partition $S$ and $x^*\in\Delta_n$,
\[
1
=
\sum_{i\in S}x_i^*
=
\sum_{h=1}^m\sum_{i\in C_h}x_i^*
=
\frac{m}{\alpha(G)}.
\]
Therefore, $m=\alpha(G),$ proving (2).
\end{proof}

\section{Hoffman--Pereira matrices}
\label{sec:hoffman-pereira}

Let $G=(V,E)$ be a simple undirected graph. The \emph{graph distance} $\dist_G(i,j)$ between vertices $i,j\in V$ is the length of the shortest path from $i$ to $j$ in $G$; if no such path exists, we set $\dist_G(i,j)=\infty$. A \emph{triangle} in $G$ is a set of three distinct pairwise-adjacent vertices, and we say that $G$ is \emph{triangle-free} if it contains no triangle.

Let $\mathcal E$ denote the set of symmetric matrices $A\in\mathcal S^n$ with $a_{ii}=1$ for every $i$ and $a_{ij}\in\{-1,0,1\}$ for every $i\neq j$. For $A\in\mathcal E$, the \emph{negative-entry graph} of $A$, denoted $G_-(A)$, is the graph on vertex set $[n]$ with edge set
\[
E(G_-(A)) = \{\,ij : i<j,\ a_{ij}=-1\,\}.
\]

Hoffman and Pereira \cite{HoffmanPereira1973} characterize the matrices $A\in\mathcal E$ that are copositive, in terms of the following combinatorial condition on the pattern of $-1$ entries.

\begin{definition}[Hoffman--Pereira sign condition]
\label{def:HP-condition}
A matrix $A\in\mathcal E$ satisfies the \emph{Hoffman--Pereira sign condition} if:
\begin{enumerate}[label=\textup{(HP\arabic*)}]
\item $G_-(A)$ is triangle-free;
\item if $i\neq j$ and $\dist_{G_-(A)}(i,j)=2$, then $a_{ij}=1$.
\end{enumerate}
\end{definition}

\begin{theorem}[Hoffman--Pereira, {\cite[Theorem 3.2]{HoffmanPereira1973}}]
\label{thm:HP-original}
Let $A\in\mathcal E$. Then $A$ is copositive if and only if $A$ satisfies the Hoffman--Pereira sign condition.
\end{theorem}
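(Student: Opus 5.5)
Both directions are handled separately, and the sufficiency direction is the substantive one.

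\emph{Necessity.} We test the quadratic form on vectors supported on at most three coordinates. Suppose $G_-(A)$ contains a triangle $\{i,j,k\}$ and take $y=e_i+e_j+e_k$. Then $y^{\mathsf T}Ay = 3 + 2(-3) = -3<0$, so $A$ is not copositive. Next suppose $\dist_{G_-(A)}(i,j)=2$, with common neighbour $k$, and $a_{ij}\in\{0,-1\}$. Since $\dist=2$, we have $a_{ij}\neq -1$, so $a_{ij}=0$. Take $y=e_i+e_j+t\,e_k$. Then
\[
y^{\mathsf T}Ay = 2 + t^2 - 4t,
\]
which is negative at $t=2$. Hence copositivity forces (HP1) and (HP2).

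\emph{Sufficiency.} We prove that $p_A(x)=x^{\mathsf T}Ax\ge 0$ on $\mathbb R^n_+$ by strong induction on $n$, following the local-identity strategy announced in the introduction. Fix $U\subseteq[n]$, let $S_U=\sum_{i\in U}x_i$, and for $i\in U$ let $Z_U(i)=\{j\in U\setminus\{i\}: a_{ij}=0\}$.

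\emph{Step 1: the identity.} Write
\[
p_U=\sum_{i\in U}x_i\,\ell_i,\qquad \ell_i=\sum_{j\in U}a_{ij}x_j .
\]
Multiplying by $S_U$, split each $S_U$ according to the sign pattern of row $i$, i.e. into $x_i$, the $-1$ neighbours, the $+1$ entries, and $Z_U(i)$. This should give
\[
S_U\,p_U=\sum_{i\in U}x_i\Big(\ell_i'\Big)^2+\sum_{i\in U}x_i\,p_{Z_U(i)}+R_U .
\]
Here $\ell_i'$ is the restriction of $\ell_i$ to the closed negative neighbourhood together with the $+1$ part, and $R_U$ is a cubic form.

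\emph{Step 2: the residual.} We check that $R_U$ has nonnegative coefficients by examining each monomial $x_ix_jx_k$. Triangle-freeness rules out $x_ix_jx_k$ with all three pairwise $-1$, which would have coefficient $-6$. Condition (HP2) guarantees that whenever $j$ and $k$ are both negative neighbours of $i$, the contribution from $a_{jk}=+1$ compensates the cross term arising from squaring. Cubic terms involving a zero entry are exactly those absorbed into $x_i\,p_{Z_U(i)}$.

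\emph{Step 3: induction.} Each $Z_U(i)$ is a proper subset of $U$, and the principal submatrix $A[Z_U(i)]$ again satisfies the sign condition, since conditions (HP1) and (HP2) pass to induced subgraphs. Indeed, in the induced subgraph distances can only grow; if two vertices are at distance $2$ inside it, they are at distance $2$ in the original graph, so the entry is $1$. By induction, $p_{Z_U(i)}\ge 0$ on the orthant. All other terms in the identity are manifestly nonnegative there, so $S_U\,p_U\ge0$, and hence $p_U\ge0$ on $\mathbb R^U_+\setminus\{0\}$. Iterating the identity also produces a certificate with a product multiplier, yielding membership in some $\widetilde{\MQ}_n^{(r)}$.

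\emph{Main obstacle.} The crux is finding the exact grouping of Step 1 so that every negative cubic coefficient cancels using only (HP1) and (HP2). I would first try
\[
\ell_i'=x_i-\sum_{j\sim i}x_j+\sum_{a_{ij}=1}x_j,
\]
and verify the coefficients of $x_i^2x_j$, $x_ix_j^2$, and $x_ix_jx_k$ case by case according to the pattern of $(a_{ij},a_{ik},a_{jk})$.
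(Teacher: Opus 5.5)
Your proposal is correct and follows the paper's route. Your $\ell_i'$ is just the full row form $L_{U,i}(x)=\sum_{j\in U}a_{ij}x_j$ (zero entries contribute nothing), so Steps~1--2 are exactly Theorem~\ref{thm:local-identity} together with the case table of Lemma~\ref{lem:gamma-nonnegative}, and your necessity argument is the paper's test-vector argument; the only variation is that you induct on pointwise nonnegativity of $p_{Z_U(i)}$ (via heredity of the sign condition to principal submatrices) rather than on certificates with product multipliers as in Proposition~\ref{prop:existence}, which is a slightly more direct route when only copositivity is wanted.
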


We first observe that the necessity of conditions (HP1) and (HP2) is easy to prove directly, using the same test vector in both cases. Suppose $A\in\mathcal E$ is copositive. If $G_-(A)$ contains a triangle $\{i,j,k\}$, then setting $x=e_i+e_j+e_k$, we have
\[
x^{\mathsf T}Ax = 3 + 2(a_{ij}+a_{ik}+a_{jk}) = 3-6 = -3 < 0,
\]
contradicting copositivity. This proves (HP1).

If $i\neq j$ satisfy $\dist_{G_-(A)}(i,j)=2$, say via a common neighbor $k$ in $G_-(A)$ (so $a_{ik}=a_{kj}=-1$), then by (HP1) we must have $a_{ij}\neq-1$; suppose toward a contradiction that $a_{ij}=0$. Setting $x=e_i+e_j+e_k$ once more,
\[
x^{\mathsf T}Ax = 3+2(a_{ij}+a_{ik}+a_{jk}) = 3+2(0-1-1) = -1 < 0,
\]
again contradicting copositivity. Hence $a_{ij}=1$, proving (HP2).

The sufficiency direction is the substantial part of the Hoffman--Pereira theorem. In this section we show that every matrix satisfying the Hoffman--Pereira sign condition belongs to some cone $\widetilde{\mathcal Q}_n^{(r)}$, and is thus copositive (Theorem~\ref{thm:sos-certificate} below). This may be regarded as an alternative proof of the Hoffman--Pereira theorem, obtained by an explicit sum-of-squares construction.

\subsection{Recursive identity}
\label{subsec:recursive-identity}

Throughout this section, $A = (a_{ij})_{i,j\in[n]}\in\mathcal E$. For $U\subseteq[n]$ we write $x_U=(x_i)_{i\in U}$ and $A[U]$ for the principal submatrix of $A$ indexed by $U$. We also define
\[
p_U(x) = \sum_{i,j\in U} a_{ij}\,x_i x_j = x_U^{\mathsf T} A[U]\, x_U,
\qquad
S_U(x) = \sum_{i\in U} x_i,
\]
with the convention $p_\emptyset = 0$.

For $i\in U$, define the linear form
\[
L_{U,i}(x) = \sum_{j\in U} a_{ij}\, x_j,
\]
and the \emph{zero-neighborhood} of $i$ inside $U$,
\[
Z_U(i) = \{\, j\in U\setminus\{i\} : a_{ij}=0 \,\}.
\]

For a three-element subset $\{i,j,k\}\subseteq U$, define
\begin{equation}
\label{eq:gamma-def}
\gamma_{ijk} \;=\;
a_{ij}+a_{ik}+a_{jk}
-a_{ij}a_{ik}-a_{ij}a_{jk}-a_{ik}a_{jk}
-\one_{\{a_{ij}=a_{ik}=0\}}a_{jk}
-\one_{\{a_{ij}=a_{jk}=0\}}a_{ik}
-\one_{\{a_{ik}=a_{jk}=0\}}a_{ij},
\end{equation}
where $\one_{\{\mathcal P\}}$ equals $1$ if the statement $\mathcal P$ holds and $0$ otherwise. Since $A\in\mathcal E$ forces every pairwise value in \eqref{eq:gamma-def} to lie in $\{-1,0,1\}$, and \eqref{eq:gamma-def} treats the three pairs symmetrically, $\gamma_{ijk}$ depends only on the unordered triple of values $\{a_{ij},a_{ik},a_{jk}\}$, not on any further structure of $U$ or of the index set $[n]$.

Finally, set
\[
R_U(x) = 2\!\!\sum_{\{i,j,k\}\subseteq U}\!\! \gamma_{ijk}\, x_i x_j x_k.
\]

We first prove that, under the Hoffman--Pereira sign condition, $R_U$ has nonnegative coefficients for all $U\subseteq[n]$.

\begin{lemma}
\label{lem:gamma-nonnegative}
Assume that $A$ satisfies the Hoffman--Pereira sign condition. Then every coefficient $\gamma_{ijk}$ of $R_U$ is nonnegative, for every $U\subseteq[n]$ and every triple $\{i,j,k\}\subseteq U$.
\end{lemma}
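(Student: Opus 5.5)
Since $\gamma_{ijk}$ depends only on the unordered multiset of values $\{a_{ij},a_{ik},a_{jk}\}\subseteq\{-1,0,1\}$, the plan is a finite case check. There are ten multisets of three values from $\{-1,0,1\}$. We will first use the sign condition to rule out the ones that cannot occur, and then evaluate \eqref{eq:gamma-def} on the rest. The claim is independent of $U$, because $\gamma_{ijk}$ is defined entrywise from $A$ alone.

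\textbf{Step 1: excluded patterns.} By (HP1), the multiset $\{-1,-1,-1\}$ is impossible, since it is a triangle in $G_-(A)$. Now take any triple with two entries equal to $-1$, say $a_{ik}=a_{jk}=-1$. Then $i$ and $j$ share the neighbor $k$ in $G_-(A)$, and $a_{ij}\neq-1$ by (HP1), so $\dist_{G_-(A)}(i,j)=2$. Hence (HP2) forces $a_{ij}=1$. This excludes $\{-1,-1,0\}$, and the only admissible pattern with two $-1$'s is $\{-1,-1,1\}$.

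\textbf{Step 2: the remaining patterns.} Write $s$ for the sum of the three values, $q$ for the sum of their pairwise products, and $c$ for the indicator correction (the last three terms of \eqref{eq:gamma-def}). A correction term contributes only when two of the values are $0$, and it then subtracts the third value. We evaluate $s-q-c$ on each admissible pattern:
\begin{itemize}
\item $\{0,0,0\}$: $0$;
\item $\{0,0,1\}$: $1-0-1=0$;
\item $\{0,0,-1\}$: $-1-0-(-1)=0$;
\item $\{0,1,1\}$: $2-1=1$;
\item $\{0,1,-1\}$: $0-(-1)=1$;
\item $\{0,-1,-1\}$: excluded in Step 1;
\item $\{1,1,1\}$: $3-3=0$;
\item $\{1,1,-1\}$: $1-(1-1-1)=2$;
\item $\{1,-1,-1\}$: $-1-(-1-1+1)=0$;
\item $\{-1,-1,-1\}$: excluded in Step 1.
\end{itemize}
Every admissible pattern gives $\gamma_{ijk}\ge0$. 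In particular, the indicator corrections are exactly what neutralize the patterns $\{0,0,\pm1\}$.

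\textbf{Where the difficulty lies.} There is no real obstacle. The only substantive point is Step 1, which recognizes that HP1 and HP2 kill exactly the two patterns that would give negative values: $\{-1,-1,-1\}$ gives $-3-3=-6$, and $\{0,-1,-1\}$ gives $-2-1=-3$. The rest is bookkeeping, and it should be double-checked against the exact form of \eqref{eq:gamma-def}.
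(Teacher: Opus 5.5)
Your proof is correct and takes essentially the same approach as the paper: you exclude $\{-1,-1,-1\}$ by (HP1) and $\{-1,-1,0\}$ by (HP2), then evaluate $\gamma_{ijk}$ on the eight remaining multisets, and your values agree with the paper's table. Your Step 1 is slightly more explicit than the paper's, since you note that (HP1) is what guarantees $\dist_{G_-(A)}(i,j)=2$ rather than $1$.
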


\begin{proof}
Fix three distinct vertices $i,j,k\in U$. Since $\gamma_{ijk}$ depends only on the unordered multiset of values $\{a_{ij},a_{ik},a_{jk}\}$, it suffices to inspect this multiset. Under the Hoffman--Pereira sign condition, the possible multisets and the corresponding values of $\gamma_{ijk}$ are:
\[
\begin{array}{c|c}
\text{multiset } \{a_{ij},a_{ik},a_{jk}\} & \gamma_{ijk} \\
\hline
\{1,1,1\} & 0 \\
\{1,1,0\} & 1 \\
\{1,0,0\} & 0 \\
\{0,0,0\} & 0 \\
\{-1,1,1\} & 2 \\
\{-1,1,0\} & 1 \\
\{-1,0,0\} & 0 \\
\{-1,-1,1\} & 0
\end{array}
\]
No other multiset can occur. Indeed, three $-1$ entries would form a triangle in $G_-(A)$, which is excluded by the sign condition. If exactly two of the three entries equal $-1$, the endpoints of the corresponding two negative edges lie at distance two in $G_-(A)$, so the sign condition forces the third entry to equal $+1$. This rules out, e.g., $\{-1,-1,0\}$. Since every value of $\gamma_{ijk}$ occurring above is nonnegative, the lemma follows.
\end{proof}

We now prove the main recursive identity underlying our certificate.

\begin{theorem}
\label{thm:local-identity}
For every $U\subseteq[n]$,
\begin{equation}
\label{eq:local-identity}
S_U(x)\,p_U(x)
= \sum_{i\in U} x_i\, L_{U,i}(x)^2
+ \sum_{i\in U} x_i\, p_{Z_U(i)}(x)
+ R_U(x).
\end{equation}
\end{theorem}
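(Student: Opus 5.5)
The identity is a polynomial identity of degree three in the variables $x_U$. I would prove it by comparing coefficients of every monomial type: $x_i^3$, $x_i^2x_j$ with $i\neq j$, and $x_ix_jx_k$ with $i,j,k$ distinct. Every term involves only the entries $a_{ij}$ with indices in $U$, and $a_{ii}=1$. The coefficient of a monomial therefore depends only on the entries among the indices it involves, so each comparison is local.

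\textbf{Monomials $x_i^3$.} On the left, $S_Up_U$ contributes $a_{ii}=1$. In $\sum_\ell x_\ell L_{U,\ell}^2$ only $\ell=i$ contributes, giving $a_{ii}^2=1$. The middle sum contributes nothing, since $i\notin Z_U(i)$ and $p_{Z_U(\ell)}$ has no $x_\ell$ factor. $R_U$ is squarefree. Both sides equal $1$.

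\textbf{Monomials $x_i^2x_j$.} On the left, this monomial arises from $x_i\cdot a_{ij}x_ix_j$ (twice, since $p_U$ contains $2a_{ij}x_ix_j$) and from $x_j\cdot a_{ii}x_i^2$. This gives $2a_{ij}+1$.

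On the right, the squares contribute from $\ell=i$ the term $2a_{ii}a_{ij}=2a_{ij}$, and from $\ell=j$ the term $a_{ji}^2$. The middle sum contributes from $\ell=j$ when $i\in Z_U(j)$, giving $a_{ii}=1$, and only when $a_{ij}=0$.

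So the right side is $2a_{ij}+a_{ij}^2+\one_{\{a_{ij}=0\}}$. Since $a_{ij}\in\{-1,0,1\}$, we have $a_{ij}^2+\one_{\{a_{ij}=0\}}=1$, and the two sides agree.

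\textbf{Squarefree monomials $x_ix_jx_k$.} On the left, the coefficient is $2(a_{ij}+a_{ik}+a_{jk})$, one $2a$ for each choice of the linear factor.

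From the squares, $\ell=i$ gives $2a_{ij}a_{ik}$, and symmetrically for $\ell=j,k$. This yields $2(a_{ij}a_{ik}+a_{ij}a_{jk}+a_{ik}a_{jk})$.

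From the middle sum, $\ell=i$ contributes $2a_{jk}$ exactly when $j,k\in Z_U(i)$, that is, when $a_{ij}=a_{ik}=0$. This gives the three indicator terms times $2$.

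Adding $R_U$'s coefficient $2\gamma_{ijk}$ and using definition \eqref{eq:gamma-def}, the right side collapses to $2(a_{ij}+a_{ik}+a_{jk})$. Indeed, $\gamma_{ijk}$ was defined precisely as the defect.

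The only step needing care is the $x_i^2x_j$ case. There the identity uses $a_{ij}^2+\one_{\{a_{ij}=0\}}=1$, which is exactly where the hypothesis $A\in\mathcal E$ enters, together with the unit diagonal. I expect this bookkeeping, including the factors of $2$ in the symmetric expansion and which index carries the linear factor, to be the main source of potential error. I would double-check it on a small example such as $|U|=3$. No Hoffman--Pereira condition is needed for the identity itself; that condition only enters through Lemma~\ref{lem:gamma-nonnegative}.
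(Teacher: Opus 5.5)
Your proposal is correct and takes essentially the same approach as the paper: a coefficient comparison over the monomial types $x_i^3$, $x_i^2x_j$ and $x_ix_jx_k$. As in the paper, the $x_i^2x_j$ case reduces to $a_{ij}^2+\one_{\{a_{ij}=0\}}=1$ on $\{-1,0,1\}$, and the squarefree case holds by the definition of $\gamma_{ijk}$. Your remark that the identity uses only $A\in\mathcal E$, with the Hoffman--Pereira condition entering only through the nonnegativity of $\gamma_{ijk}$, also matches the paper.
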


\begin{proof}
Both sides of \eqref{eq:local-identity} are homogeneous cubic polynomials in the variables $(x_i)_{i\in U}$. We verify the identity by comparing coefficients on both sides.

\smallskip
\noindent\textit{Coefficient of $x_i^3$.}
On the left-hand side the coefficient is $a_{ii}=1$. On the right, only the term $x_i L_{U,i}(x)^2$ can produce $x_i^3$ (through the $j=\ell=i$ summand of $L_{U,i}(x)^2$), contributing $a_{ii}^2=1$. Neither $\sum_i x_i\,p_{Z_U(i)}(x)$ nor $R_U(x)$ contains an $x_i^3$ term: the former because $i\notin Z_U(i)$, and the latter because $R_U$ is by definition supported on monomials $x_ix_jx_k$ with $i,j,k$ pairwise distinct.

\smallskip
\noindent\textit{Coefficient of $x_i^2x_j$, $i\neq j$.}
Write $a := a_{ij}\in\{-1,0,1\}$. The left-hand side contributes $1+2a$. On the right, $\sum_\ell x_\ell L_{U,\ell}(x)^2$ contributes $2a$ from $\ell=i$ and $a^2$ from $\ell = j$. The term $\sum_\ell x_\ell\, p_{Z_U(\ell)}(x)$ contributes $\one_{\{a=0\}}$ (only $\ell=j$ can supply this monomial, and only when $i\in Z_U(j)$, i.e.\ $a=0$). By construction $R_U$ has no $x_i^2x_j$ monomial, since its sum ranges only over triples of \emph{distinct} indices. Hence \eqref{eq:local-identity}, restricted to this monomial, reduces to the numerical identity
\[
1+2a = \bigl(2a+a^2\bigr) + \one_{\{a=0\}},
\qquad\text{equivalently}\qquad
1-a^2-\one_{\{a=0\}}=0.
\]
This holds for every $a\in\{-1,0,1\}$, since on this range $\one_{\{a=0\}}=1-a^2$.

\smallskip
\noindent\textit{Coefficient of $x_ix_jx_k$, $i,j,k$ distinct.}
The left-hand side contributes
\[
2\bigl(a_{ij}+a_{ik}+a_{jk}\bigr).
\]
On the right, $\sum_\ell x_\ell L_{U,\ell}(x)^2$ contributes
\[
2\bigl(a_{ij}a_{ik}+a_{ij}a_{jk}+a_{ik}a_{jk}\bigr),
\]
and $\sum_\ell x_\ell\, p_{Z_U(\ell)}(x)$ contributes
\[
2\,\one_{\{a_{ij}=a_{ik}=0\}}a_{jk}
+2\,\one_{\{a_{ij}=a_{jk}=0\}}a_{ik}
+2\,\one_{\{a_{ik}=a_{jk}=0\}}a_{ij}.
\]
The coefficient required from $R_U$ to balance the identity is therefore precisely $2\gamma_{ijk}$, matching \eqref{eq:gamma-def} by definition.
\end{proof}

\subsection{Sum-of-squares certificate for Hoffman--Pereira matrices}
\label{subsec:tildeQ-membership-simple}

We now show that every matrix satisfying the Hoffman--Pereira sign condition lies in $\widetilde{\mathcal Q}_n^{(r)}$ for some $r\in\mathbb N$, and thus admits a sum-of-squares certificate of copositivity. Recall that $\mathcal N_{n,r}$ denotes the cone of homogeneous polynomials of degree $r$ in $n$ variables with nonnegative coefficients. Consider the set
\[
\mathcal H_{n,r} = \Big\{\, \sum_{\substack{\beta \in \mathbb{N}^n \\ |\beta| \in \{r,\,r+2\}}} x^\beta\, \sigma_\beta \;:\; \sigma_\beta \in \Sigma_{n,\, r+2-|\beta|} \,\Big\}.
\]
The cones $\widetilde{\mathcal Q}_n^{(r)}$ (see~\eqref{eq:tildeQ}) are then given by
\begin{equation}
\widetilde{\mathcal Q}_n^{(r)} = \Big\{\, M \in \mathcal S^n \;:\; p(x)\, x^{\mathsf T} M x \in \mathcal H_{n,r},\ p\in\mathcal N_{n,r},\ \|p\|_1 = 1 \,\Big\}.
\end{equation}

The following absorption property follows directly from the definitions of $\mathcal N_{n,r}$ and $\mathcal H_{n,r}$.

\begin{lemma}
\label{lem:absorption}
If $q\in\mathcal H_{n,r}$ and $m\in\mathcal N_{n,s}$, then $mq\in\mathcal H_{n,r+s}$. Moreover, $\mathcal H_{n,r}$ is closed under addition.
\end{lemma}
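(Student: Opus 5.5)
The plan is to unwind the definitions. Take $q\in\mathcal H_{n,r}$ and $m\in\mathcal N_{n,s}$. By definition we can write
\[
q=\sum_{|\beta|\in\{r,r+2\}} x^\beta\sigma_\beta, \qquad \sigma_\beta\in\Sigma_{n,r+2-|\beta|},
\]
and
\[
m=\sum_{|\gamma|=s} c_\gamma x^\gamma, \qquad c_\gamma\ge0.
\]

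Expanding the product gives
\[
mq=\sum_{\gamma,\beta} x^{\gamma+\beta}\,(c_\gamma\sigma_\beta).
\]
In each summand we have $|\gamma+\beta|=s+|\beta|\in\{r+s,\,r+s+2\}$. Also $c_\gamma\sigma_\beta$ is a nonnegative multiple of a sum of squares, so it is again a sum of squares. Its degree is $r+2-|\beta|=(r+s)+2-|\gamma+\beta|$, so $c_\gamma\sigma_\beta\in\Sigma_{n,(r+s)+2-|\gamma+\beta|}$.

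Several pairs $(\gamma,\beta)$ may produce the same exponent $\beta'=\gamma+\beta$. We group these together and set
\[
\sigma'_{\beta'}=\sum_{\gamma+\beta=\beta'}c_\gamma\sigma_\beta .
\]
Every term in this sum lies in the convex cone $\Sigma_{n,(r+s)+2-|\beta'|}$, so $\sigma'_{\beta'}$ does too. This exhibits $mq\in\mathcal H_{n,r+s}$.

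For closure under addition, take two decompositions of the required form. For each $\beta$, add their multipliers $\sigma_\beta+\sigma'_\beta$. The sum stays in $\Sigma_{n,r+2-|\beta|}$ because that set is a convex cone, i.e. sums of squares are closed under addition. This gives a decomposition of the sum in $\mathcal H_{n,r}$.

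There is no real obstacle. The only point to check is the degree bookkeeping, namely that the index set $\{r,r+2\}$ shifts uniformly by $s$. That holds because $m$ is homogeneous of degree exactly $s$.
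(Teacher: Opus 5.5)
Your proof is correct and follows essentially the same route as the paper: expand $mq$ term by term, observe that the degrees shift uniformly by $s$, and use that nonnegative multiples and sums of sums of squares remain sums of squares. Your explicit regrouping of the pairs $(\gamma,\beta)$ that give the same exponent is a bookkeeping step the paper leaves implicit, and treating the degree-$0$ multipliers as elements of $\Sigma_{n,0}$ rather than as separate constants $c_\beta\ge 0$ is only a notational difference.
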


\begin{proof}
Write $q=\sum_{|\beta|=r} x^\beta\sigma_\beta + \sum_{|\beta|=r+2} c_\beta x^\beta$ with $\sigma_\beta\in\Sigma_{n,2}$, $c_\beta\ge0$, as in the definition of $\mathcal H_{n,r}$. Expanding $m=\sum_{|\gamma|=s} d_\gamma x^\gamma$ with $d_\gamma\ge0$, we have
\[
mq = \sum_{|\beta|=r}\ \sum_{|\gamma|=s} d_\gamma\, x^{\beta+\gamma}\,\sigma_\beta
\;+\; \sum_{|\beta|=r+2}\ \sum_{|\gamma|=s} d_\gamma c_\beta\, x^{\beta+\gamma}.
\]
Each monomial $x^{\beta+\gamma}$ in the first sum has degree $r+s$, and its coefficient $d_\gamma\sigma_\beta$ remains in $\Sigma_{n,2}$ since $d_\gamma\ge0$. Each monomial in the second sum has degree $r+s+2$ with nonnegative coefficient $d_\gamma c_\beta$. Hence $mq\in\mathcal H_{n,r+s}$. Closure under addition is immediate.
\end{proof}

We now prove that every principal submatrix $A[U]$ of a Hoffman--Pereira matrix admits a certificate of the type $A[U]\in \widetilde{\MQ}^{(r)}$. Taking $U=[n]$ then yields our main theorem.

\begin{proposition}
\label{prop:existence}
Let $A\in\mathcal E$ satisfy the Hoffman--Pereira sign condition. Then, for every $U\subseteq[n]$, there exist $r_U\ge0$ and a nonzero $M_U\in\mathcal N_{|U|,r_U}$ such that
\[
M_U(x)\,p_U(x) \;\in\; \mathcal H_{|U|,r_U}.
\]
\end{proposition}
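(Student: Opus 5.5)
Argue by strong induction on $|U|$, using the local identity of Theorem~\ref{thm:local-identity} as the recursion step. We regard every polynomial in the variables $(x_i)_{i\in U}$ as a polynomial in all $n$ variables. Then $\mathcal N$, $\mathcal H$ and Lemma~\ref{lem:absorption} apply verbatim, and nonzero-ness of multipliers is preserved, since the restriction to $U$ involves only variables of $U$.

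\textbf{Base case.} For $U=\emptyset$ we have $p_\emptyset=0$. Take $r_\emptyset=0$ and $M_\emptyset=1$; then $0\in\mathcal H_{|U|,0}$ trivially.

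\textbf{Inductive step.} Fix $U\neq\emptyset$ and assume the claim for all proper subsets. For each $i\in U$ the set $Z_U(i)$ does not contain $i$, so $|Z_U(i)|<|U|$. The induction hypothesis then gives $r_i:=r_{Z_U(i)}$ and a nonzero $M_i:=M_{Z_U(i)}\in\mathcal N_{r_i}$ with $M_i\,p_{Z_U(i)}\in\mathcal H_{r_i}$. Set $r:=\max_i r_i$. We homogenize the multipliers to a common degree by replacing $M_i$ with $\widetilde M_i:=S_U^{\,r-r_i}M_i$. These remain nonzero with nonnegative coefficients, and by Lemma~\ref{lem:absorption} we have $\widetilde M_i\,p_{Z_U(i)}\in\mathcal H_{r}$.

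Now define
\[
M_U:=S_U\prod_{i\in U}\widetilde M_i \in\mathcal N_{1+|U|r},
\]
which is nonzero because it is a product of nonzero polynomials with nonnegative coefficients. Multiply \eqref{eq:local-identity} by $\prod_i\widetilde M_i$ and examine the right-hand side term by term.
\begin{itemize}
\item $x_iL_{U,i}^2$ is a degree-one monomial times a square, so it lies in $\mathcal H_1$. Multiplied by the nonnegative-coefficient polynomial $\prod_i\widetilde M_i$, it lies in $\mathcal H_{1+|U|r}$ by Lemma~\ref{lem:absorption}.
\item $R_U$ has nonnegative coefficients by Lemma~\ref{lem:gamma-nonnegative}. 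As a cubic with nonnegative coefficients it belongs to $\mathcal H_1$, being a degree-$3$ term of the form $x^\beta\cdot$constant. Absorption again places its product in $\mathcal H_{1+|U|r}$.
\item For the middle term, write $x_i\,p_{Z_U(i)}\prod_j\widetilde M_j=\bigl(x_i\prod_{j\neq i}\widetilde M_j\bigr)\cdot\widetilde M_i\,p_{Z_U(i)}$. The factor $\widetilde M_i\,p_{Z_U(i)}$ lies in $\mathcal H_r$. The prefactor lies in $\mathcal N_{1+(|U|-1)r}$. Lemma~\ref{lem:absorption} therefore gives membership in $\mathcal H_{1+|U|r}$.
\end{itemize}
Closure under addition then yields $M_U\,p_U\in\mathcal H_{|U|,1+|U|r}$, which completes the induction.

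\textbf{Main obstacle.} The only delicate point is bookkeeping: every summand must land in the same $\mathcal H_{r}$, and multipliers built from variables outside the current set must be allowed. The first issue is handled by padding each multiplier with powers of $S_U$. The second is handled by noting that $\mathcal H$ and $\mathcal N$ for variables of a subset embed into those for a superset. Everything else is Lemma~\ref{lem:absorption} together with the nonnegativity of $R_U$. The degree grows exponentially, but that is irrelevant here. Taking $U=[n]$ and normalizing $\|M_{[n]}\|_1=1$ then gives membership in $\widetilde{\MQ}_n^{(r)}$.
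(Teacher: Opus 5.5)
Your proof is correct and follows the paper's argument: strong induction on $|U|$, multiplying the identity of Theorem~\ref{thm:local-identity} by the product of the inductive multipliers, and applying Lemma~\ref{lem:absorption} term by term. The padding by $S_U^{\,r-r_i}$ is harmless but unnecessary, because with $P=\prod_{i\in U}M_{Z_U(i)}$ every summand $x_i\,P\,p_{Z_U(i)}$ already lands in degree $1+\sum_i r_i$, which is how the paper avoids the common-degree bookkeeping.
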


\begin{proof}
We argue by strong induction on $|U|$. If $|U|\le1$, take $M_U=1\in\mathcal N_{|U|,0}$. Then $M_Up_U=p_U$, and for $|U|=0$ this is the zero polynomial (trivially in $\mathcal H_{0,0}$), while for $|U|=1$, $p_U(x)=x_i^2\in\mathcal H_{1,0}$.

Assume $|U|\ge2$ and that the claim holds for every proper subset of $U$. In particular, since $Z_U(i)\subsetneq U$ for all $i\in U$, for each $i\in U$ there exist $r_i\ge0$ and a nonzero $M_{Z_U(i)}\in\mathcal N_{|Z_U(i)|,r_i}$ with $M_{Z_U(i)}\,p_{Z_U(i)}\in\mathcal H_{|Z_U(i)|,r_i}$. By Theorem~\ref{thm:local-identity},
\[
S_U(x)\,p_U(x) = \underbrace{\sum_{i\in U} x_iL_{U,i}(x)^2 + R_U(x)}_{=:B_U(x)} \;+\; \sum_{i\in U} x_i\,p_{Z_U(i)}(x).
\]
Since each $x_iL_{U,i}(x)^2$ is a degree-$1$ monomial times a square in $\Sigma_{|U|,2}$, and $R_U(x)$ is a degree-$3$ polynomial with nonnegative coefficients by Lemma~\ref{lem:gamma-nonnegative}, we have $B_U\in\mathcal H_{|U|,1}$.

Let $P:=\prod_{i\in U}M_{Z_U(i)}(x)\in\mathcal N_{|U|,\,R}$, where $R:=\sum_{i\in U}r_i$. Multiplying the identity above by $P$ gives
\[
S_U(x)\,P(x)\,p_U(x) = P(x)\,B_U(x) \;+\; \sum_{i\in U} x_i\,P(x)\,p_{Z_U(i)}(x).
\]
By Lemma~\ref{lem:absorption} (with $m=P\in\mathcal N_{|U|,R}$, $q=B_U\in\mathcal H_{|U|,1}$), the first term satisfies $P\,B_U\in\mathcal H_{|U|,R+1}$.

For each $i\in U$, write $P = M_{Z_U(i)}\cdot\big(\prod_{k\ne i}M_{Z_U(k)}\big)$, so that
\[
x_i\,P(x)\,p_{Z_U(i)}(x) = \Big(x_i\prod_{k\ne i}M_{Z_U(k)}(x)\Big)\cdot\Big(M_{Z_U(i)}(x)\,p_{Z_U(i)}(x)\Big).
\]
The first factor is a nonzero element of $\mathcal N_{|U|,\,R-r_i+1}$, and the second is, by the induction hypothesis, an element of $\mathcal H_{|Z_U(i)|,r_i}\subseteq\mathcal H_{|U|,r_i}$. By Lemma~\ref{lem:absorption} again, the product lies in $\mathcal H_{|U|,R+1}$.

Summing over $i\in U$ and adding the $P\,B_U$ term (using closure of $\mathcal H_{|U|,R+1}$ under addition, Lemma~\ref{lem:absorption}), we conclude
\[
S_U(x)\,P(x)\,p_U(x) \;\in\; \mathcal H_{|U|,R+1}.
\]
Setting $M_U := S_U\cdot P$, a nonzero element of $\mathcal N_{|U|,R+1}$ (nonzero since $S_U$ and $P$ are nonzero, nonnegative-coefficient homogeneous polynomials), this is exactly $M_U\,p_U\in\mathcal H_{|U|,R+1}$, completing the induction with $r_U=R+1$.
\end{proof}

\begin{theorem}
\label{thm:sos-certificate}
Let $A\in\mathcal E$ satisfy the Hoffman--Pereira sign condition. Then there exists $r\ge0$ such that
\[
A \in \widetilde{\mathcal Q}_n^{(r)}.
\]
\end{theorem}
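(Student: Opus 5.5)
The theorem follows from Proposition~\ref{prop:existence} applied with $U=[n]$, plus a normalization. With $U=[n]$ we have $p_{[n]}(x)=x^{\mathsf T}Ax$, and the proposition gives $r:=r_{[n]}\ge0$ and a nonzero $M:=M_{[n]}\in\mathcal N_{n,r}$ with
\[
M(x)\,x^{\mathsf T}Ax\in\mathcal H_{n,r}.
\]
The only gap between this and membership $A\in\widetilde{\mathcal Q}_n^{(r)}$ is the requirement $\|p\|_1=1$ on the multiplier.

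To close it, I would take $p:=M/\|M\|_1$. Since $M$ is a nonzero polynomial with nonnegative coefficients, $\|M\|_1$ is the sum of its coefficients and is strictly positive. So $p\in\mathcal N_{n,r}$ and $\|p\|_1=1$.

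Next I would check that $\mathcal H_{n,r}$ is closed under multiplication by positive scalars. This is the case $s=0$ of Lemma~\ref{lem:absorption}, with the constant $m=1/\|M\|_1\in\mathcal N_{n,0}$. It can also be seen directly: scaling each $\sigma_\beta$ by a positive constant keeps it a sum of squares of the same degree.

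It follows that $p(x)\,x^{\mathsf T}Ax=\|M\|_1^{-1}M(x)\,x^{\mathsf T}Ax\in\mathcal H_{n,r}$, which is exactly the defining condition~\eqref{eq:tildeQ} for $A\in\widetilde{\mathcal Q}_n^{(r)}$.

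There is no genuine obstacle here. The only point needing care is that the polynomial ring of $\mathcal H_{|U|,r_U}$ in the proposition has $|U|=n$ variables when $U=[n]$, so it coincides with $\mathcal H_{n,r}$. The substantive work lies in Theorem~\ref{thm:local-identity}, Lemma~\ref{lem:gamma-nonnegative}, and the induction in Proposition~\ref{prop:existence}.
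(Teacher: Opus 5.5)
Your proposal is correct and follows the paper's proof essentially verbatim: apply Proposition~\ref{prop:existence} with $U=[n]$, normalize the multiplier to $p=M/\|M\|_1$ using $\|M\|_1>0$, and use that $\mathcal H_{n,r}$ is closed under positive scaling. Your remark that this scaling is the $s=0$ case of Lemma~\ref{lem:absorption} is a valid alternative justification of that last step.
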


\begin{proof}
Apply Proposition~\ref{prop:existence} with $U=[n]$: there exist $r\ge0$ and a nonzero $M\in\mathcal N_{n,r}$ such that $M(x)\,\big(x^{\mathsf T}Ax\big)\in\mathcal H_{n,r}$. Set $p:=M/\|M\|_1\in\mathcal N_{n,r}$, so that $\|p\|_1=1$. Dividing through by $\|M\|_1>0$, and using that $\mathcal H_{n,r}$ is closed under multiplication by positive scalars, gives
\[
p(x)\,\big(x^{\mathsf T}Ax\big) \;\in\; \mathcal H_{n,r},
\]
which shows that $A\in\widetilde{\mathcal Q}_n^{(r)}$.
\end{proof}

\subsection{Membership in the cones \texorpdfstring{$\MK_n^{(r)}$ and $\MQ_n^{(r)}$}{K and Q}}

In general, matrices satisfying the Hoffman--Pereira sign condition do not belong to the cones $\MK_n^{(r)}$.
\begin{remark}
\label{ex:horn-plus-edge}
Let $H\in\rev{\mathcal S^5}{\mathbb S^5}$ be the Horn matrix,
\[
H = \begin{pmatrix}
1 & 1 & -1 & -1 & 1 \\
1 & 1 & 1 & -1 & -1 \\
-1 & 1 & 1 & 1 & -1 \\
-1 & -1 & 1 & 1 & 1 \\
1 & -1 & -1 & 1 & 1
\end{pmatrix},
\]
and let $M_2=\begin{pmatrix}1&-1\\-1&1\end{pmatrix}\in\rev{\mathcal S^2}{\mathbb S^2}$. Consider the direct sum
\[
H\oplus M_2 = \begin{pmatrix} H & 0 \\ 0 & M_2 \end{pmatrix} \in \mathcal S^{7}.
\]

This matrix satisfies the Hoffman--Pereira sign condition. Its diagonal is all $1$'s and every off-diagonal entry lies in $\{-1,0,1\}$. The negative-entry graph $G_-(H\oplus M_2)$ is the disjoint union of the $5$-cycle $1\text{-}4\text{-}2\text{-}5\text{-}3\text{-}1$ (from $H$) and the single edge $\{6,7\}$ (from $M_2$), which is triangle-free (HP1). It is straightforward to verify that the vertices at distance two in this $5$-cycle are exactly the pairs $\{i,j\}\subseteq\{1,\dots,5\}$ with $H_{ij}=1$, so (HP2) holds within the $C_5$ block. The two-vertex component $\{6,7\}$ has no pair at distance two, so (HP2) holds within the $M_2$ block. Vertices in distinct components are at infinite distance, so (HP2) imposes no condition across the two blocks. By Theorem~\ref{thm:sos-certificate}, it follows that
\[
H\oplus M_2 \in \widetilde{\mathcal Q}_7^{(r)}
\]
for some $r\ge0$. In fact, $r=1$ suffices with the multiplier $\sum_{i=1}^5x_i$: the Horn matrix is the graph matrix of a $5$-cycle and hence lies in $\mathcal Q_5^{(1)}$ by Theorem~\ref{thm:main-identity}, while multiplying the positive semidefinite form $(x_6-x_7)^2$ by this nonnegative-coefficient linear form preserves the required decomposition.

On the other hand, by \cite[Theorem~3]{lv21b}, we have
\[
H\oplus M_2 \;\notin\; \bigcup_{r\ge0}\mathcal K_7^{(r)},
\]
and hence also $H\oplus M_2\notin\bigcup_{r\ge0}\mathcal Q_7^{(r)}$, since $\mathcal Q_7^{(r)}\subseteq\mathcal K_7^{(r)}$ for every $r$.
\end{remark}

\begin{remark}
Dickinson \emph{et al.}~\cite{DDGH} proved that every $5\times5$ copositive matrix with unit diagonal belongs to $\MK_5^{(1)}$. In particular, every $5\times5$ matrix satisfying the Hoffman--Pereira sign condition belongs to $\MK_5^{(1)}$. Lemma~\ref{lem:four-zeros} below strengthens this conclusion for Hoffman--Pereira matrices by placing them in $\MQ_5^{(1)}$. Remark~\ref{ex:horn-plus-edge} gives a $7\times7$ matrix satisfying the Hoffman--Pereira condition that does not belong to any cone $\MK^{(r)}$ (and hence to any $\MQ^{(r)}$). For every $n>7$, taking its direct sum with $I_{n-7}$ gives an $n\times n$ example: the resulting matrix still satisfies the sign condition, while membership in any $\MK_n^{(r)}$ would imply membership of its $7\times7$ principal submatrix in $\MK_7^{(r)}$. This argument does not resolve the case $n=6$. 
\end{remark}

We finish by giving a sufficient condition for Hoffman--Pereira matrices to belong to $\MQ_n^{(1)}$.

\begin{lemma}
\label{lem:four-zeros}
Let $A\in\mathcal E$ satisfy the Hoffman--Pereira sign condition, and for $i\in[n]$ let
\[
Z(i) = \{\, j\in[n]\setminus\{i\} : a_{ij}=0 \,\}
\]
denote the zero-neighborhood of $i$ in $A$. If $|Z(i)| \le 4$ for every  $i\in[n]$, then $A \in \mathcal{Q}_n^{(1)}$.

\end{lemma}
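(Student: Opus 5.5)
The plan is to apply the local identity of Theorem~\ref{thm:local-identity} once, with $U=[n]$, and to show that every term on its right-hand side already has the shape required by the definition~\eqref{def-Q} of $\MQ_n^{(1)}$. For $U=[n]$ we have $S_U=\sum_i x_i$ and $Z_U(i)=Z(i)$, so the identity reads
\[
\Big(\sum_{i=1}^n x_i\Big)\,x^{\mathsf T}Ax
= \sum_{i=1}^n x_i\,L_{[n],i}(x)^2 + R_{[n]}(x) + \sum_{i=1}^n x_i\,p_{Z(i)}(x).
\]
We need this to lie in $\mathcal H_{n,1}$: a sum of degree-one monomials times sums of squares of linear forms, plus a cubic with nonnegative coefficients.

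The first two groups are handled as in the proof of Proposition~\ref{prop:existence}. Each term $x_iL_{[n],i}^2$ is a degree-one monomial times a square. By Lemma~\ref{lem:gamma-nonnegative}, $R_{[n]}$ is a cubic with nonnegative coefficients.

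The remaining work is the terms $x_i\,p_{Z(i)}(x)$. The key claim is that each $A[Z(i)]$ is copositive.
\begin{itemize}
\item By Theorem~\ref{thm:sos-certificate}, or by the sufficiency part of Theorem~\ref{thm:HP-original}, $A$ is copositive.
\item Principal submatrices of a copositive matrix are copositive: extend a nonnegative test vector by zeros.
\end{itemize}
I would use this route rather than trying to verify the Hoffman--Pereira condition for $A[Z(i)]$ directly. That condition need not be inherited, because distances in the induced subgraph of $G_-(A)$ can grow.

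Since $|Z(i)|\le4$, Diananda's theorem applies: $A[Z(i)]=P_i+N_i$ with $P_i$ positive semidefinite and $N_i$ entrywise nonnegative. Equivalently, $A[Z(i)]\in\MK^{(0)}_{|Z(i)|}$, by the Choi--Lam description recalled in the Introduction. Consequently
\[
p_{Z(i)}(x)=x_{Z(i)}^{\mathsf T}P_ix_{Z(i)}+x_{Z(i)}^{\mathsf T}N_ix_{Z(i)}.
\]
The first summand is a sum of squares of linear forms, via a Cholesky or spectral factorization of $P_i$. The second is a quadratic with nonnegative coefficients. The trivial cases $|Z(i)|\le1$ are included, since $p_{Z(i)}$ is then $0$ or $x_j^2$.

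Multiplying by $x_i$, the term $x_i\,x_{Z(i)}^{\mathsf T}P_ix_{Z(i)}$ is a degree-one monomial times an element of $\Sigma_2$. The term $x_i\,x_{Z(i)}^{\mathsf T}N_ix_{Z(i)}$ is a cubic with nonnegative coefficients, so it is a sum over $|\beta|=3$ of nonnegative constants (elements of $\Sigma_0$) times $x^\beta$. Summing all contributions, using closure of $\mathcal H_{n,1}$ under addition (Lemma~\ref{lem:absorption}), gives $\big(\sum_i x_i\big)x^{\mathsf T}Ax\in\mathcal H_{n,1}$. This is exactly membership $A\in\MQ_n^{(1)}$ with the fixed multiplier $\big(\sum_i x_i\big)^1$.

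The only nontrivial input is the copositivity of the blocks $A[Z(i)]$ together with Diananda's decomposition for size at most $4$; this is where the hypothesis $|Z(i)|\le4$ is used. I expect this to be the main obstacle, in the sense that the argument fails for larger zero-neighborhoods. It is also worth double-checking that the exponents in~\eqref{def-Q} match: for $r=1$ we need $|\beta|\in\{1,3\}$ with $\sigma_\beta\in\Sigma_{2}$ or $\Sigma_0$ respectively, which is exactly the shape obtained above.
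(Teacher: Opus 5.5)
Your proof is correct and is essentially the paper's: apply Theorem~\ref{thm:local-identity} with $U=[n]$, use Lemma~\ref{lem:gamma-nonnegative} for $R$, and split each $x_i\,p_{Z(i)}$ via Diananda's decomposition of $A[Z(i)]$. You also helpfully make explicit, via Theorem~\ref{thm:sos-certificate} and restriction to principal submatrices, the copositivity of $A[Z(i)]$ that the paper simply asserts.

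One side remark is wrong but harmless. The sign condition \emph{is} inherited by principal submatrices: $G_-(A[U])$ is the induced subgraph on $U$, and two vertices at distance exactly two there (nonadjacent, with a common neighbour in $U$) are at distance exactly two in $G_-(A)$. So either route to the copositivity of $A[Z(i)]$ works.
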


\begin{proof}
Apply Theorem~\ref{thm:local-identity} with $U=[n]$:
\begin{equation}
\label{eq:full-identity}
\Big(\sum_{i=1}^n x_i\Big)\, x^{\mathsf T}Ax
= \sum_{i=1}^n x_i\, L_i(x)^2
+ \sum_{i=1}^n x_i\, p_{Z(i)}(x)
+ R(x),
\end{equation}
where  $R(x)$ has nonnegative coefficients by Lemma~\ref{lem:gamma-nonnegative}.

Fix $i\in[n]$. Since $A[Z(i)]$ is a copositive matrix and $|Z(i)|\le4$, Diananda's theorem~\cite{Dia} gives $A[Z(i)]=P_i+N_i$, where $P_i$ is positive semidefinite and $N_i$ is entrywise nonnegative. Thus $p_{Z(i)}(x)=x_{Z(i)}^{\mathsf T}A[Z(i)]x_{Z(i)}=x_{Z(i)}^{\mathsf T}P_ix_{Z(i)}+x_{Z(i)}^{\mathsf T}N_ix_{Z(i)}$.
Hence
\[
x_i\,p_{Z(i)}(x) = x_i\big(x_{Z(i)}^{\mathsf T}P_ix_{Z(i)}\big) + x_i\big(x_{Z(i)}^{\mathsf T}N_ix_{Z(i)}\big)
\]
The terms $x_iL_i(x)^2$ are degree-$1$ monomials times squares of linear forms, and $R(x)$ is a cubic with nonnegative coefficients. Therefore, the right-hand side of~\eqref{eq:full-identity} has the form
\[
\sum_{\substack{\beta \in \mathbb{N}^n \\ |\beta| \in \{1,3\}}} x^\beta \sigma_\beta,
\qquad \sigma_\beta \in \Sigma_{n,\,3-|\beta|},
\]
which is precisely a certificate that $A\in \mathcal{Q}_n^{(1)}$.
\end{proof}
\begin{example}[The standard $7\times7$ Hoffman--Pereira matrix]
\label{ex:hp7}
Let $A\in\mathcal E$ be the circulant matrix with first row $(1,-1,1,0,0,1,-1)$,
\[
A = \begin{pmatrix}
1&-1&1&0&0&1&-1\\
-1&1&-1&1&0&0&1\\
1&-1&1&-1&1&0&0\\
0&1&-1&1&-1&1&0\\
0&0&1&-1&1&-1&1\\
1&0&0&1&-1&1&-1\\
-1&1&0&0&1&-1&1
\end{pmatrix},
\]
the extremal matrix appearing in \cite[Section 5]{HoffmanPereira1973}. Its negative-entry graph $G_-(A)$ is the $7$-cycle $1\text{-}2\text{-}\cdots\text{-}7\text{-}1$: it is triangle-free (HP1), and every pair at distance $2$ (i.e.\ $i,i\pm2$ modulo $7$) has entry $+1$ (HP2).

For $U=[7]$, each zero-neighborhood is a single negative edge, $Z_{[7]}(i)=\{i+3,i+4\}$ (mod $7$), so $p_{Z_{[7]}(i)}(x)=(x_{i+3}-x_{i+4})^2$ is already a perfect square. Hence the plain identity of Theorem \ref{thm:local-identity} at $r=1$  is already of the required form:
\[
\big(\sum_{i=1}^7 x_i\big)\,\big(x^{\mathsf T}Ax\big)
= \sum_{i=1}^{7} x_i\,L_i(x)^2
+ \sum_{i=1}^{7} x_i\,(x_{i+3}-x_{i+4})^2
+ 2\!\!\sum_{\{i,j,k\}\in\mathcal T}\!\! x_ix_jx_k,
\]
where indices are interpreted modulo $7$ with representatives in $\{1,\dots,7\}$, $L_i(x)=x_i-x_{i-1}-x_{i+1}+x_{i-2}+x_{i+2}$, and $\mathcal T=\{\{i,j,k\}:\gamma_{ijk}=1\}$ consists of $21$ triples: $14$ of type $\{-1,0,1\}$ and $7$ of type $\{0,1,1\}$ in the notation of Lemma~\ref{lem:gamma-nonnegative}.
\end{example}

\section*{Acknowledgements}
The second authors was supported by the AI Interdisciplinary Institute ANITI 
funding through the French ``France 2030" program under the Grant 
agreement No. ANR-23-IACL-0002.

\bibliographystyle{plain}
\bibliography{references}

\end{document}